\documentclass[11pt,a4paper]{article}

\usepackage{natbib}
\setlength{\evensidemargin}{1cm} 
\setlength{\oddsidemargin}{1cm}
\setlength{\textwidth}{14cm}

\usepackage{apalike}
\usepackage{comment}
\usepackage{lipsum}
\usepackage{microtype}

\usepackage[font=small,labelfont=bf]{caption}
\usepackage
[
        a4paper,
        left=3cm,
        right=3cm,
        top=3cm,
        bottom=3cm,
]
{geometry}

\usepackage{setspace}
\usepackage{graphicx}
\graphicspath{{./figures}}

\usepackage{multicol,latexsym, array}
\usepackage{textcomp}
\usepackage[ansinew]{inputenc}
\usepackage{amsmath,amssymb, amsthm}
\usepackage[OT1]{fontenc}
\usepackage[english]{babel}
\usepackage{hyperref}

\hypersetup{
}

\usepackage{enumitem}
\usepackage{MnSymbol} 
\usepackage{empheq}
\usepackage{rotating}
\usepackage{titletoc}
\usepackage{framed}
\usepackage{multirow,bigdelim}
\usepackage{subfigure}
\usepackage{bbm}
\usepackage[noend]{algpseudocode}
\usepackage{algorithm}
\usepackage{pgf}
\usepackage{pgfplots}
\usepackage{tikz}
\usepackage{shadethm}
\usepackage{thmtools}
\usepackage{mdframed}
\usepackage{lipsum}
\usepackage{fixmath}
\usepackage{multicol}
\usepackage{footmisc}

\usepackage{caption}
\captionsetup[table]{skip=10pt}

\usepackage{xargs}  
\usepackage{xcolor} 

\definecolor{DarkGray}{RGB}{90,90,90}

\usepackage[colorinlistoftodos,prependcaption,textsize=tiny]{todonotes}
\usepackage{cleveref}

\date{\today}

\newenvironment{ackno}%
    {
    \paragraph{Acknowledgements:} 
    }

\usepackage{caption}
\usepackage{mathrsfs}
\usetikzlibrary{arrows}

\usepackage{xpatch}

\makeatletter
\xpatchcmd{\endmdframed}
  {\aftergroup\endmdf@trivlist\color@endgroup}
  {\endmdf@trivlist\color@endgroup\@doendpe}
  {}{}
\makeatother

\usepackage{nicefrac}
\usepackage{dsfont}

\usepackage{bigints}
\usepackage{mathtools}
\usepackage{fixltx2e}

\usepackage{mathtools}
\usepackage{color}
\usepackage{fancyhdr}
\usepackage{lastpage}

\setcounter{tocdepth}{2} 

\usepackage{ifthen}

\newcommand{\OT}{T}
\newcommand{\OTn}{\hat\OT_{c,n}}

\newcommand{\RR}{\mathbb{R}}

\newcommand{\NN}{\mathbb{N}}

\newcommand{\XC}{\mathcal{X}}

\newcommand{\YC}{\mathcal{Y}}

\newcommand{\CC}{\mathcal{C}}
\newcommand{\DC}{\mathcal{D}}

\newcommand{\ZC}{\mathcal{Z}}

\newcommand{\UC}{\mathcal{U}}
\newcommand{\VC}{\mathcal{V}}
\newcommand{\FC}{\mathcal{F}}
\newcommand{\GC}{\mathcal{G}}

\newcommand{\NC}{\mathcal{N}}
\newcommand{\RC}{\mathcal{R}}
\newcommand{\PC}{\mathcal{P}}
\newcommand{\pf}{\mathfrak{p}}

\newcommand{\FCot}{\mathcal{F}_\mathrm{c}}

\newcommand{\sphere}{\mathbb{S}}

\newcommand{\norm}[1]{\left\lVert#1\right\rVert}

\newcommand{\uniform}{\mathrm{Unif}}

\newcommand{\id}{\mathrm{id}}
\newcommand{\dif}{\mathrm{d}}

\newcommand{\supp}{\mathrm{supp}}

\newcommand{\EV}[1]{\mathbb{E}\left[#1 \right]}

\newcommand{\Var}{\mathrm{Var}}

\newcommand{\powerC}{^c}

\newcommand{\coloneqq}{:=}
\newcommand{\eqqcolon}{=:}

\renewcommand{\phi}{\varphi}

\newcommand{\diam}[1]{\mathrm{diam}(#1)}







\renewcommand*{\epsilon}{\varepsilon}

\newcommand{\BL}[1]{\mathrm{BL}_{1}(#1)}
\newcommand{\dimM}[1]{\overline\dim_M(#1)}

\theoremstyle{plain}
\newtheorem{theorem}{Theorem}[section]

\newtheorem{lemma}[theorem]{Lemma}
\newtheorem{proposition}[theorem]{Proposition}
\newtheorem*{theorem*}{Theorem}

\theoremstyle{definition}

\newtheorem{remark}[theorem]{Remark}
\newtheorem{example}[theorem]{Example}
\newtheorem*{example*}{Example}


\makeatletter
\def\mysequence#1{\expandafter\@mysequence\csname c@#1\endcsname}
\def\@mysequence#1{%
  \ifcase#1\or (Lip)\or (SC)\or (Hol)\or (Supercalifragilisticexpialidocious)\else\@ctrerr\fi}
\makeatother

\newtheorem{assumption}{Assumption}

\makeatletter
\newcommand{\mylabel}[2]{#2\def\@currentlabel{#2}\label{#1}}
\makeatother

\setlength\parindent{15pt}
\numberwithin{equation}{section}

\newcommand{\footremember}[2]{
	\footnote{#2}
	\newcounter{#1}
	\setcounter{#1}{\value{footnote}}
}

\newcommand{\footrecall}[1]{
	\footnotemark[\value{#1}]
}

\hyphenation{Lip-schitz}

\begin{document}

\author{Shayan Hundrieser
  \hspace{-0.6em}\footremember{equalcontr}{These authors contributed equally}%
  \hspace{-0.65em}\footremember{ims}{\scriptsize
    Institute for Mathematical
		Stochastics, University of G\"ottingen,
		Goldschmidtstra{\ss}e 7, 37077 G\"ottingen}%
  \hspace{-0.65em}\footremember{mbexc}{\scriptsize
    Cluster of Excellence "Multiscale Bioimaging: from Molecular Machines to Networks of Excitable Cells" (MBExC),
    University Medical Center,
    Robert-Koch-Stra{\ss}e 40, 37075 Göttingen}
	\\
  \footnotesize{\href{mailto:s.hundrieser@math.uni-goettingen.de}{s.hundrieser@math.uni-goettingen.de}}
  \\[2ex]
	Thomas Staudt
  \hspace{-0.6em}\footrecall{equalcontr}%
  \hspace{-0.3em}\footrecall{ims}%
  \hspace{-0.3em}\footrecall{mbexc}
	\\
  \footnotesize{\href{mailto:thomas.staudt@uni-goettingen.de}{thomas.staudt@uni-goettingen.de}}
  \\[2ex]
	Axel Munk
  \hspace{-0.6em}\footrecall{ims}%
  \hspace{-0.3em}\footrecall{mbexc}%
  \hspace{-0.0em}\footnote{\scriptsize
    Max Planck Institute for Biophysical Chemistry,
    Am Fa{\ss}berg 11, 37077 G\"ottingen}
	\\
  \footnotesize{\href{mailto:munk@math.uni-goettingen.de}{munk@math.uni-goettingen.de}}
}

\title{Empirical Optimal Transport between Different Measures Adapts to Lower Complexity}
 
\pagenumbering{arabic}

\maketitle

\begin{abstract}
\noindent The empirical optimal transport (OT) cost between two probability measures from random data is a fundamental quantity in transport based data analysis. In this work, we derive novel guarantees for its convergence rate when the involved measures are \emph{different}, possibly supported on different spaces. Our central observation is that the statistical performance of the empirical OT cost is determined by the \emph{less} complex measure, a phenomenon we refer to as \emph{lower complexity adaptation} of empirical OT.  For instance, under Lipschitz ground costs, we find that the empirical OT cost based on $n$ observations converges at least with rate $n^{-1/d}$ to the population quantity if one of the two measures is concentrated on a $d$-dimensional manifold, while the other can be arbitrary. For semi-concave ground costs, we show that the upper bound for the rate improves to $n^{-2/d}$. Similarly, our theory establishes the general convergence rate $n^{-1/2}$ for semi-discrete OT.
All of these results are valid in the two-sample case as well, meaning that the convergence rate is still governed by the simpler of the two measures. On a conceptual level, our findings therefore suggest that the curse of dimensionality only affects the estimation of the OT cost when \emph{both} measures exhibit a high intrinsic dimension. Our proofs are based on the dual formulation of OT as a maximization over a suitable function class $\FC_c$ and the observation that the $c$-transform of $\FC_c$ under bounded costs has the same uniform metric entropy as $\FC_c$ itself.
\end{abstract}
\vspace{0.5cm}
\noindent \textit{Keywords}: Wasserstein distance,  convergence rate, curse of dimensionality, metric entropy, semi-discrete, manifolds
\vspace{0.5cm}

\noindent \textit{MSC 2020 subject classification}: primary 62R07, 62G20, 62G30, 49Q22; secondary 62E20, 62F35, 60B10

\newpage \section{Introduction}
The theory of optimal transport (OT) allows for an effective comparison of probability measures that is faithful to the geometry of the underlying ground space (see \citealt{rachev1998massTheory, rachev1998massApplications, vil03,villani2008optimal,santambrogio2015optimal} for comprehensive treatments). Origins of OT date back to the seminal work by \cite{monge} and its measure theoretic generalization by \cite{kant1942_original, kant58}, paving the way for a rich theory and many applications. With recent computational advances (for a survey see \citealt{bertsimas1997introduction, cuturi18}) OT based methodology is also quickly emerging as a useful tool for data analysis with diverse applications in statistics. This includes bootstrap and resampling \citep{bickel1981, sommerfeld19FastProb, Heinemann2020}, goodness of fit testing \citep{del1999tests, hallin2021multivariate}, multivariate quantiles and ranks \citep{chernozhukov2017monge, deb2021multivariate, hallin2021distribution} and general notions of dependency \citep{nies2021transport, deb2021rates, mordant2022measuring}. For a recent survey see  \cite{panaretos2019statistical}. Further areas of application include machine learning \citep{arjovsky2017wasserstein, altschuler2017near, dvurechensky2018computational}, and computational biology \citep{evans2012phylogenetic,Schiebinger19,tameling2021Colocalization, wang2020optimal}, among others.
 
Intuitively, OT aims to transform one probability measure into another one in the most cost-efficient way. For a general formulation, let $\mu \in \PC(\XC)$ and $\nu\in \PC(\YC)$ be probability measures on Polish spaces $\XC$ and $\YC$, and consider a measurable cost function $c \,\colon \XC\times \YC\rightarrow \RR$. The \emph{optimal transport cost} between $\mu$ and $\nu$ is defined as
\begin{equation}
  \OT_c(\mu, \nu) \coloneqq \inf_{\pi\in \Pi(\mu,\nu)} \int_{\XC\times \YC} c(x,y) \,\dif\pi(x,y),
  \label{eq:PrimalOTProblem}
\end{equation}
where $\Pi(\mu, \nu)$ represents the set of all couplings between $\mu$ and $\nu$, i.e., the probability measures on $\XC\times\YC$ with marginal distributions $\mu$ and $\nu$.
In statistical problems, the measure $\mu$ is typically unknown and only i.i.d.\ observations $X_1, \dots, X_n\sim \mu$, defining the empirical measure $\hat\mu_n \coloneqq \frac{1}{n}\sum_{i = 1}^{n} \delta_{X_i}$, are available. A standard approach to estimate $\OT(\mu, \nu)$ in this setting is by means of the \emph{empirical optimal transport cost} $\OT_c(\hat \mu_n, \nu)$, whose convergence to the population value for increasing $n$ has been the subject of numerous works. Most research in this context, of which we can only give a selective overview, is devoted to the analysis of the \emph{Wasserstein distance} (cf. \citealt{mallows1972note, shorack1986empirical, villani2008optimal}) where $\XC = \YC$ and the cost $c$ in \eqref{eq:PrimalOTProblem} corresponds to the $p$-th power of a metric $d$ on $\XC$. More specifically, for $\mu, \nu \in \PC(\XC)$, the $p$-Wasserstein distance for $p \ge 1$ is defined by
\begin{equation*}
	W_p(\mu, \nu) \coloneqq \big(\OT_{d^p}(\mu, \nu)\big)^{1/p},
\end{equation*}
which is a metric on the space of probability measures on $(\XC,d)$ with finite $p$-th moment.
 
A first fundamental contribution for the analysis of the empirical Wasserstein distance $W_p(\hat \mu_n, \mu)$ in case of $p = 1$  was made by \cite{dudley1969}  via metric entropy bounds, asserting\footnote{Throughout this work, we write  $a_n \lesssim b_n$ for two non-negative real-valued sequences $(a_n)_{n \in \NN}$ and $(b_n)_{n\in \NN}$ if there exists a constant $C>0$ such that $a_n \leq C b_n$ for all $n \in\NN$. If $a_n \lesssim b_n \lesssim a_n$, we write $a_n \asymp b_n$. 
}  $\mathbb{E}\left[W_1(\hat{\mu}_n,\mu)\right]\lesssim n^{-1/d}$ for compactly supported probability measures $\mu$ on $\RR^d$ with $d \geq 3$. In particular, if $\mu$ is absolutely continuous with respect to the Lebesgue measure, this upper bound is tight. Under similar conditions, \cite{dobric1995asymptotics} derived almost sure limits of $n^{1/d}W_1(\hat{\mu}_n,\hat\mu_n')$ through explicit matching arguments for two independent empirical measures $\hat \mu_n$ and $\hat \mu_n'$ of a common distribution $\mu$. 
Extensions to $p > 1$ in Polish metric spaces were obtained by \cite{boissard2014} relying on covering arguments of the underlying ground space. 
For probability measures on Euclidean spaces with possibly unbounded support, \cite{dereich2013constructive} and \cite{fournier2015rate} derived upper bounds on the $p$-th moment $\EV{W_p^p(\hat \mu_n, \mu)}$  under certain moment assumptions by explicitly constructing a couplings between $\hat\mu_n$ and $\mu$. For a compactly supported probability measure $\mu$ on $\RR^d$, their main result implies for $n \geq 1$ that
 \begin{equation}\label{eq:FournierBound}
\EV{W_p(\hat \mu_n, \mu)} \leq \EV{W^p_p(\hat \mu_n, \mu)}^{1/p} \lesssim r_{p,d}(n)\coloneqq
  \begin{cases}
  	n^{-1/2p} & \text{ if } d< 2p,\\
  	n^{-1/2p}\log(n)^{1/p} & \text{ if } d= 2p,\\
  	n^{-1/d} & \text{ if } d> 2p.\\
  \end{cases}
\end{equation}
This bound is known to be tight in several settings, e.g., for $d < 2p$ when $\mu$ is discretely supported and for $d > 2p$ when $\mu = \uniform[0, 1]^d$ is the uniform distribution on the unit cube. 
For $d= 2p$, the differences between $\hat \mu_n$ and $\mu$ at multiple scales culminate in the proof of the upper bound to an additional logarithmic factor, however, it remains open whether it is of correct order. For instance, contributions by \cite{ajtai1984optimal} and \cite{talagrand1994matching} show for $d = 2$ and $p \ge 1$ that $\EV{W_p(\hat \mu_n,\mu)} \asymp n^{-1/2}\log(n)^{1/2}$ if $\mu = \uniform[0,1]^2$ (see also \citealt{bobkov2019simple} for an alternative proof) which improves \eqref{eq:FournierBound} for $p =1$ by an additional $\log(n)^{1/2}$ factor. Notably, the bounds in \eqref{eq:FournierBound} are known to delimit the accuracy of \emph{any} estimator $\tilde \mu_n$ of $\mu$ with respect to the Wasserstein distance in the high-dimensional regime. More precisely, without additional assumptions on $\mu$, the rates in \eqref{eq:FournierBound} are (up to logarithmic factors) minimax optimal \citep{singh2018minimax}, which demonstrates that the estimation of measures in the Wasserstein distance severely suffers from the \emph{curse of dimensionality}.

To overcome this issue, there has been increased interest in structural properties of $\mu$ that allow for improved convergence rates. 
For probability measures on a compact Polish space, \cite{weed2019sharp} derived tight bounds in terms of a notion of intrinsic dimension of $\mu$ (the upper and lower Wasserstein dimension). In particular, if $\mu$ is compactly supported on $\RR^d$ with upper Wasserstein dimension $s > 2p$, they established that $\EV{W_p(\hat \mu_n, \mu)} \lesssim n^{-1/s}$. 
Moreover, for uniformly distributed $\mu$ on a compact connected Riemannian manifold of dimension $d \ge 3$, \cite{ledoux2019OptimalMatchingI} derived the bound $\EV{W_p(\hat \mu_n, \mu)}\asymp n^{-1/d}$, effectively improving upon \eqref{eq:FournierBound} if $3 \leq d\leq 2p$.
Faster convergence rates can also be obtained under smoothness assumptions on Lebesgue absolutely continuous measures by taking suitable wavelet or kernel density estimators \citep{weedBerthet19, deb2021rates, Manole2021_Plugin}, which exploit the smoothness explicitly in contrast to the vanilla empirical OT cost. 
Under a high degree of smoothness, they approach the population measure in Wasserstein distance nearly with the parametric rate $n^{-1/2}$ (instead of $n^{-1/d}$), but come with additional computational challenges \citep{Vacher21}.

So far, we only discussed the situation when $\hat \mu_n$ is compared to $\mu$. From a statistical perspective, however, it is of similar interest to investigate $W_p(\hat \mu_n, \nu)$ for a different measure $\nu$. We refer to \cite{Munk98} and \cite{sommerfeld2018} for various applications, such as testing for relevant differences and confidence intervals for $W_p$. One way to transfer the rates from $W_p(\hat \mu_n, \mu)$ to $W_p(\hat \mu_n,\nu)$ is by means of the triangle inequality,
\begin{subequations}\label{eq:triangleIneq}
\begin{equation}\label{eq:triangleIneqOneSample}
	\left| W_p(\hat \mu_n, \nu) - W_p(\mu, \nu) \right|\leq W_p(\hat \mu_n, \mu).
\end{equation}
Hence, all of the previous bounds on $W_p(\hat \mu_n, \mu)$ immediately imply the same upper bounds for the convergence rate of $W_p(\hat \mu_n, \nu)$ towards $W_p(\mu, \nu)$ when $\mu$ and $\nu$ are \emph{distinct} measures on a common metric space. In the two-sample case, when $\nu$ is additionally estimated by $\hat \nu_n\coloneqq \frac{1}{n} \sum_{i =1}^{n} \delta_{Y_i}$ based on an i.i.d.\ sample $Y_1, \dots, Y_n\sim \nu$, the triangle inequality yields 
\begin{equation}\label{eq:triangleIneqTwoSample}
  |W_p(\hat \mu_n, \hat\nu_n) - W_p(\mu, \nu)| \leq W_p(\hat \mu_n, \mu) + W_p(\hat \nu_n, \nu),
\end{equation}
\end{subequations}
which implies the same upper bounds as in \eqref{eq:FournierBound} (as well as all improvements described above) for compactly supported $\mu, \nu$ on $\RR^d$. Therefore, with $r_{p,d}(n)$ as in \eqref{eq:FournierBound},
\begin{equation}
  \EV{\big| W_p(\hat \mu_n, \hat\nu_n)-W_p(\mu, \nu)\big|}  \lesssim r_{p,d}(n).
  \label{eq:FournierBoundDifferent}
\end{equation}
These upper bounds match the minimax rates (up to logarithmic factors) among all estimators of $W_p(\mu, \nu)$ when no additional assumptions are placed on the measures \citep{liang2019minimax, niles2019estimation}. 
 In particular, this suggests that estimation of the Wasserstein distance between (potentially different) two measures is (without additional assumptions) statistically as difficult as estimation of the underlying measure with respect to Wasserstein loss. 

However, crucial to the minimax optimality of \eqref{eq:FournierBoundDifferent} is the fact that $\mu$ and $\nu$ can be chosen to be arbitrarily close. In fact, in case $\mu \neq \nu$ are sufficiently separated, faster convergence rates may occur.
Indeed, for compactly supported $\mu, \nu$ on $\RR^d$, \cite{chizat2020} employed the dual formulation of the squared $2$-Wasserstein distance (with a similar strategy as for the $1$-Wasserstein distance by \citealt{sriperumbudur2012empirical}) to derive the bound
\begin{subequations}\label{eq:ChizatBound}
\begin{equation}\label{eq:ChizatBoundSquared}
  \EV{\big| W_2^2(\hat \mu_n, \hat\nu_n)-W_2^2(\mu, \nu)\big|} \lesssim r_{2,d}^2(n).
\end{equation}
If $\mu\neq \nu$ with $W_2(\mu, \nu)\geq\delta>0$, this implies squared convergence rates 
\begin{equation}\label{eq:ChizatBoundDifferent}
  \EV{\big| W_2(\hat \mu_n, \hat\nu_n)-W_2(\mu, \nu)\big|} \lesssim r_{2,d}^2(n)/\delta
\end{equation}
\end{subequations}
when compared to \eqref{eq:FournierBoundDifferent}.
For $d \geq 5$, these upper bounds were recently generalized by \cite{Manole21} to arbitrary $p\geq 1$, asserting the convergence rate $n^{-\min(p,2)/d}$ for the empirical $p$-Wasserstein distance. They also provided analogous bounds under convex H\"older smooth costs and proved their sharpness for certain instances as well as  minimax rate optimality up to logarithmic factors. 

Inspired by these developments, this work is dedicated to a comprehensive understanding of the statistical performance of the empirical OT cost when the underlying probability measures are not only different but may additionally be supported on distinct spaces, for example if $\XC$ and $\YC$ are submanifolds of $\RR^d$ with (possibly) different dimension.
This setting is practically relevant, since the concentration of observations from a high-dimensional ambient space on a low dimensional subspace is a commonly encountered phenomenon, reflected by the popularity of nonlinear dimensionality reduction techniques like manifold learning (see, e.g., \citealt{talwalkar2008large,zhu2018image}). Based on the upper bound in~\eqref{eq:triangleIneq},
 one is inclined to believe that the convergence rate is determined by the slower rate, i.e., by the measure with \emph{higher} intrinsic dimension. However, the pivotal (and maybe unexpected) finding of this work is that the convergence rate is actually determined by the measure with \emph{lower} intrinsic dimension.
In this sense, empirical OT naturally adapts to measures with distinct complexity in the most favorable way, and estimating the population value is statistically no harder than estimating the \emph{simpler} one of the measures $\mu$ and $\nu$. We refer to this phenomenon of OT as \emph{lower complexity adaptation} (LCA). 

\begin{figure}[t]
  \small\centering
\begingroup%
  \makeatletter%
  \providecommand\color[2][]{%
    \errmessage{(Inkscape) Color is used for the text in Inkscape, but the package 'color.sty' is not loaded}%
    \renewcommand\color[2][]{}%
  }%
  \providecommand\transparent[1]{%
    \errmessage{(Inkscape) Transparency is used (non-zero) for the text in Inkscape, but the package 'transparent.sty' is not loaded}%
    \renewcommand\transparent[1]{}%
  }%
  \providecommand\rotatebox[2]{#2}%
  \newcommand*\fsize{\dimexpr\f@size pt\relax}%
  \newcommand*\lineheight[1]{\fontsize{\fsize}{#1\fsize}\selectfont}%
  \ifx\svgwidth\undefined%
    \setlength{\unitlength}{360.02591596bp}%
    \ifx\svgscale\undefined%
      \relax%
    \else%
      \setlength{\unitlength}{\unitlength * \real{\svgscale}}%
    \fi%
  \else%
    \setlength{\unitlength}{\svgwidth}%
  \fi%
  \global\let\svgwidth\undefined%
  \global\let\svgscale\undefined%
  \makeatother%
  \begin{picture}(1,0.3279717)%
    \lineheight{1}%
    \setlength\tabcolsep{0pt}%
    \put(0,0){\includegraphics[width=\unitlength,page=1]{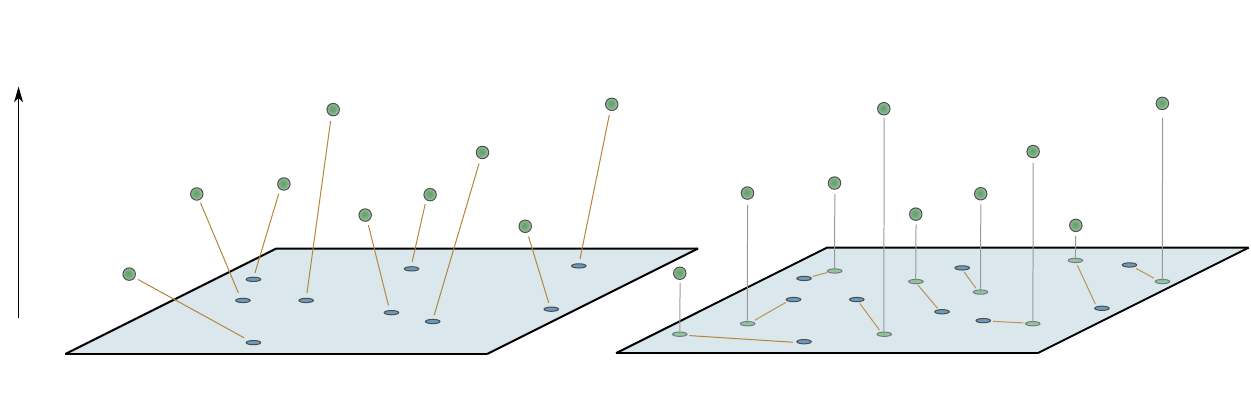}}%
    \put(0.31275234,0.30957404){\color[rgb]{0,0,0}\makebox(0,0)[lt]{\lineheight{1.25}\smash{\begin{tabular}[t]{l}\textbf{(a)}\end{tabular}}}}%
    \put(0.7685988,0.30957404){\color[rgb]{0,0,0}\makebox(0,0)[lt]{\lineheight{1.25}\smash{\begin{tabular}[t]{l}\textbf{(b)}\end{tabular}}}}%
    \put(0.44477341,0.00494815){\color[rgb]{0,0,0}\makebox(0,0)[t]{\lineheight{1.25}\smash{\begin{tabular}[t]{c}$\XC = [0, 1]^2\times\{0\}$\end{tabular}}}}%
    \put(0.0358321,0.26036425){\color[rgb]{0,0,0}\makebox(0,0)[t]{\lineheight{1.25}\smash{\begin{tabular}[t]{c}$\RR$\end{tabular}}}}%
  \end{picture}%
\endgroup%

  \caption{Optimal transport between two- and three-dimensional point clouds
    (blue and green)  for squared Euclidean costs $\norm{x-y}^2$. The optimal
    assignment in \textbf{(a)} is characterized by first projecting each
    point to the plane spanned by $\XC$ before matching the data points, as
    depicted in \textbf{(b)}.}
  \label{fig:cartesian}
\end{figure}

\begin{example*}
Consider $\YC = [0,1]^{d_2}$ for $d_2 \geq 1$ and let $\XC\subset\YC$ be a convex subset with dimension $d_1 \leq d_2$.
In \Cref{sec:LCA_primal}, we establish that the optimal transportation of any $\nu\in\PC(\YC)$ to any $\mu\in\PC(\XC)$ under squared Euclidean costs can be decomposed into two motions (see \Cref{fig:cartesian}): first an orthogonal projection onto the linear space spanned by $\XC$, and then an OT assignment within that linear space. Since such a projection is statistically negligible when compared to an OT assignment, it follows for $W_2(\mu,\nu) \geq \delta > 0$ by \eqref{eq:ChizatBound} that
\begin{equation}\label{eq:W2BoundOurs} 
  \EV{\big| W_2(\hat \mu_n, \hat\nu_n) - W_2(\mu, \nu)\big|} 
  \lesssim r_{2,d_1}^2(n)/\delta,
\end{equation}
which is independent of $d_2$, reflecting the LCA principle. 
\end{example*}

Our core contribution is to show that this phenomenon is a hallmark feature of empirical OT that far exceeds the scope of convex subsets and orthogonal projections.
To formalize our main result, let $\XC$ and $\YC$ be Polish spaces and consider a continuous bounded cost function $c\colon \XC\times \YC\rightarrow \RR$. In this setting, the OT cost enjoys a dual formulation \citep{villani2008optimal}
\begin{equation*}
  \OT_c(\mu, \nu) = \max_{f\in \FCot} \int_{\XC} f \,\dif\mu +  \int_{\YC} f\powerC \dif\nu
  ,
  \label{eq:DualFormulation}
\end{equation*}
where $\FCot$ is a suitable collection of uniformly bounded measurable functions on $\XC$ (defined in \Cref{sec:Ctransforms}) and $f\powerC(y) \coloneqq \inf_{x \in \XC} c(x,y)- f(x)$ denotes the $c$-transform of $f \in \FCot$. To investigate the empirical OT cost, we quantify the complexity of the class $\FCot$ and its $c$-transformed counterpart $\FCot\powerC=\{f\powerC\mid f \in \FCot\}$ in terms of their \emph{uniform metric entropy}. The uniform metric entropy of a class $\GC$ of real-valued functions on a set $\ZC$ is defined as the logarithm of the covering number with respect to uniform norm $\norm{\cdot}_\infty$, which is given for $\epsilon > 0$ by
\begin{equation*}
  \NC(\epsilon, \GC, \norm{\cdot}_\infty) \coloneqq \inf\left\{n\in \NN \,\Big|\, \text{there exist } g_1, \dots, g_n \colon \ZC\rightarrow \RR \text{ with }  \sup_{g \in \GC} \min\limits_{1\leq i \leq n}  \norm{g - g_i}_\infty \leq \epsilon\right\}.
\end{equation*}
A simple but crucial observation, which lies at the heart of this work, is that $c$-transformation with bounded costs is a Lipschitz operation under the uniform norm. Since $f^{cc} = f$ for all $f \in \FCot$, this in particular implies (\Cref{thm:coveringUnderCTransforms}) 
\begin{equation}\label{eq:IdenticalMetricEntropy}
  \NC(\epsilon, \FCot\powerC, \norm{\cdot}_\infty) = \NC(\epsilon, \FCot, \norm{\cdot}_\infty).
\end{equation}
This captures the LCA principle from the dual perspective: we only need to be able to control the complexity of either $\FCot$ or $\FCot\powerC$. Then, under the growth condition 
\begin{equation*}
	\log \NC(\epsilon, \FCot, \norm{\cdot}_\infty) \lesssim \epsilon^{-k}
\end{equation*}
for $\epsilon > 0$ sufficiently small and a fixed $k>0$, we prove for arbitrary $\mu\in \PC(\XC)$ and $\nu \in \PC(\YC)$ that any of the empirical estimators \begin{equation}\label{eq:EmpiricalEstimator}
  \OTn \in \{\OT_c(\hat \mu_n, \nu), \OT_c(\mu, \hat \nu_n), \OT_c(\hat \mu_n, \hat\nu_n)\}
\end{equation}
  satisfies the upper bound (\Cref{thm:AbstractUpperBound})
  \begin{equation*}
    \EV{\big| \OTn - \OT_c(\mu, \nu)\big|}
    \lesssim 
    \begin{cases}
      n^{-1/2}        & \text{if } k < 2,\\
      n^{-1/2}\log(n) & \text{if } k = 2,\\
      n^{-1/k}        & \text{if } k > 2.\\
    \end{cases}
  \end{equation*} 

As we discuss in \Cref{sec:Applications}, suitable bounds for the uniform metric entropy of the function class $\FCot$ are typically determined by the space $\XC$ and the regularity properties of the cost function. Since $\XC$ can be chosen as the support of $\mu$, these bounds can often be understood in terms of the intrinsic complexity of $\mu$ (or the one of $\nu$, if it turns out to be lower).
To explore the consequences of the LCA principle, we put special focus on the setting where the measure $\mu$ is supported on a space with \emph{low intrinsic dimension} while $\nu$ may live on a general Polish space. For example, we obtain convergence rates for semi-discrete OT, where $\mu$ is supported on finitely many points only. In this setting, we find that the empirical estimator $\OTn$ always enjoys the parametric rate (\Cref{thm:SemiDiscreteOT})
 \begin{equation*}
    \EV{\big| \OTn - \OT_c(\mu, \nu)\big|}\lesssim n^{-1/2}.
\end{equation*}
This complements distributional limits for the one-sample estimator $\OTn = \OT_{c}(\hat \mu_n, \nu)$  by \cite{delBarrio2021SemidiscreteCLT}.
We also derive metric entropy bounds for $\FC_c$ if $\XC$ is given in terms of the image of sufficiently regular functions on sufficiently nice domains, where we exploit the Lipschitz continuity, semi-concavity, or H\"older continuity of the cost function to obtain novel theoretical guarantees for the convergence rate of $\OTn$ (Theorems \ref{thm:LipschitzOT}, \ref{thm:SemiconcaveOT} and \ref{thm:HolderOT}). 
For example, a special case of \Cref{thm:SemiconcaveOT} states that the bound \eqref{eq:W2BoundOurs} for the $2$-Wasserstein distance on $\RR^d$ remains valid for compactly supported probability measures $\mu$ and $\nu$ if $\mu$ is concentrated on a $d_1$-dimensional $\CC^2$ submanifold (\Cref{ex:SemiconcaveExamples}$(iii)$).

 In \Cref{sec:Simulations}, we gather computational evidence for the LCA principle and present simulation results for various settings where the underlying measures have different intrinsic dimensions. In particular, we observe that the numerical findings are in line with the predictions of our theory.
Section \ref{sec:Discussion} concludes our work with a discussion and an outline of some open questions. \Cref{sec:OmittedProofs} contains bounds on the uniform metric entropy of $\FCot$.

\section{Lower Complexity Adaptation (LCA)}\label{sec:GeneralRates}

Throughout the manuscript, we work with absolutely bounded and continuous cost functions $c\,\colon \XC\times \YC \to \RR$ on Polish spaces $\XC$ and $\YC$. For convenience, we formulate our theory for costs whose range is restricted to the interval $[0, 1]$. Since $\OT_{ac + b} = a\cdot \OT_{c} + b$ for any $a>0$ and $b\in \RR$, this is not a genuine restriction, and all of our results can easily be adapted to general costs that are absolutely bounded.

\subsection{Duality and Complexity}\label{sec:Ctransforms}

In the following, we consider the dual formulation of the OT problem. This requires the notion of $c$-conjugacy under a given cost function~$c\,\colon \XC\times \YC \to [0, 1]$ for non-empty sets $\XC$ and $\YC$.
The \emph{$c$-transforms} of $f\,\colon\XC\to\RR$ and
$g\,\colon \YC\to\RR$ are defined by
\begin{equation}
  f^c(y) \coloneqq \inf_{x\in\XC} c(x,y) - f(x)
  \qquad\text{and}\qquad
  g^c(x) \coloneqq \inf_{y\in\YC} c(x,y) - g(y).
\end{equation}
A function $f\,\colon \XC \rightarrow \RR$  is called \emph{$c$-concave} if there exists
$g\,\colon \YC\rightarrow \RR$ such that $f = g^c$.
For Polish spaces $\XC$ and $\YC$ and a continuous cost function, any $c$-transform
$f^c$ or $g^c$ is upper semi-continuous (as an infimum over continuous
functions) and thus (Borel-)measurable. The following existence statement, which
is tailored to bounded costs, shows that dual solutions of OT can always
be assumed to be bounded and $c$-concave.

\begin{theorem*}[Duality]
  \label{prop:ExistenceOTPotential}
  Let $\XC$ and $\YC$ be Polish spaces and let $c\,\colon \XC\times \YC\rightarrow [0, 1]$ be continuous. 
  Denote the class of feasible $c$-concave potentials by
	\begin{equation}\label{eq:NiceTransforms}
  \FCot
  =
  \left\{ f\,\colon\XC\to [-1, 1] \,\Big|\,  f \text{ is $c$-concave with } \norm{f^c}_\infty \leq 1 \right\}.
  	\end{equation}
	Then, for any $\mu\in \PC(\XC)$ and $\nu \in \PC(\YC)$, it holds that
  \begin{equation}\label{eq:ExistenceOTPotential}
    \OT_c(\mu, \nu)
    =
    \max_{ f \in \FCot}\int  f \dif\mu+\int f\powerC \dif\nu.
  \end{equation}
\end{theorem*}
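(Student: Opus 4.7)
The plan is to deduce the statement from the classical Kantorovich duality and then perform an affine renormalisation of the dual potentials using the boundedness of $c$. First I would invoke the general duality for continuous bounded costs on Polish spaces (e.g.\ Theorem 5.10 in \cite{villani2008optimal}), which yields
\begin{equation*}
  \OT_c(\mu, \nu) = \sup_{\phi, \psi} \int \phi\, d\mu + \int \psi\, d\nu,
\end{equation*}
the supremum ranging over integrable pairs satisfying $\phi(x) + \psi(y) \leq c(x,y)$. Given any such pair, replacing $\phi$ by $f \coloneqq \psi^c$ and $\psi$ by $f^c = \psi^{cc}$ only improves the objective, since $\psi^c \geq \phi$ and $\psi^{cc} \geq \psi$ by the definition of the $c$-transform. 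This reduces the problem to pairs of the form $(f, f^c)$ with $f$ $c$-concave, and an optimal pair of this form is guaranteed to exist in the present setting.

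The second step is to renormalise $f$ into the prescribed class $\FCot$. Using an $\varepsilon$-approximation of the infimum defining $f = f^{cc}$, one obtains the oscillation bound
\begin{equation*}
  f(x) - f(x') \leq \sup_{y \in \YC} \bigl[c(x, y) - c(x', y)\bigr] \leq 1
\end{equation*}
for all $x, x' \in \XC$, and symmetrically for $f^c$. Feasibility $f + f^c \leq c \leq 1$ gives the further inequality $\sup f + \sup f^c \leq 1$, and the bound $f^c(y) \geq \inf_x [-f(x)] = -\sup f$ (which uses $c \geq 0$) together with its dual counterpart yields $\inf f + \inf f^c \geq -(\sup f + \sup f^c) \geq -1$. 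A short case check shows that these four inequalities imply that the intervals $[-1 - \inf f,\, 1 - \sup f]$ and $[\sup f^c - 1,\, 1 + \inf f^c]$ share a common element $a \in \RR$. Replacing $(f, f^c)$ by the shifted pair $(f + a, f^c - a)$ leaves both feasibility and the dual value untouched, since $(f + a)^c = f^c - a$, while placing both potentials inside $[-1, 1]$; hence the shifted $f$ belongs to $\FCot$.

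The main technical point, and the obstacle I anticipate, is ensuring that the supremum is attained by some $f \in \FCot$ rather than only approached. The cleanest route is to borrow the existence statement already contained in the cited Kantorovich duality and then apply the normalisation above, which does not affect attainment. If direct attainment is preferred, the oscillation bound shows that any maximising sequence $(f_n) \subset \FCot$ inherits the modulus of continuity of $c$, so that Heine--Cantor on compact exhaustions of $\supp\mu$ and $\supp\nu$ (available by tightness) together with a diagonal Arzel\`a--Ascoli extraction produces a pointwise limit $f^\ast$, and dominated convergence (valid because $|f_n|, |f_n^c| \leq 1$ and $\mu, \nu$ are probability measures) transfers the supremum value to $f^\ast$, while upper semi-continuity of the $c$-transform under pointwise convergence preserves $c$-concavity in the limit.
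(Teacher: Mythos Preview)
Your proposal is correct and follows essentially the same approach as the paper: both invoke Theorem~5.10 of \cite{villani2008optimal} for strong duality and the existence of a $c$-concave optimizer, after which the only remaining task is to place the optimizer in $\FCot$. The paper simply defers the bounds $\|f\|_\infty,\|f^c\|_\infty\le 1$ to step~4 of Villani's proof, whereas you work out the oscillation-and-shift normalisation explicitly; your Arzel\`a--Ascoli alternative for attainment is sound but superfluous once existence is borrowed from Villani.
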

\begin{proof}
  Strong duality and the existence of maximizers in $\FCot$ follow from Theorem~5.10(iii) in \cite{villani2008optimal}, where the bounds on $f\in\FCot$ and $f^c$ are detailed in step~4 of the proof.
\end{proof}

The properties of the feasible $c$-concave potentials $\FCot$ strongly depend on the cost function $c$ and the ground space $\XC$. For example, if the family $\big\{c(\cdot, y)\,|\, y\in\YC\big\}$ of partially evaluated costs has a common modulus of continuity (with respect to a metric that metrizes $\XC$), then all $c$-concave functions $f\in\FCot$ are continuous with the same modulus. Hence, if $c(\cdot,y)$ is Lipschitz continuous uniform in $y \in \YC$, then each $ f \in\FCot$ is Lipschitz as well \citep[Section 1.2]{santambrogio2015optimal}. 
We denote the element-wise $c$-transform of the set $\FCot$ by $\FCot^c
= \{ f^c\,|\, f\in\FCot\}$, which is by definition also uniformly bounded by one. 
A crucial observation is that the uniform metric entropy of the function class
$\FCot^c$ is bounded by the one of $\FCot$, no matter how complex the space
$\YC$ is or how badly the functions $c(x, \cdot)$ for fixed $x\in\XC$ behave. 

\begin{lemma}[Complexity under $c$-transformation]\label{thm:coveringUnderCTransforms}
Let $\XC$ and $\YC$ be non-empty sets and $c \,\colon \XC\times \YC \rightarrow
[0,1]$. 
If $\FC$ is a bounded function class on $\XC$, it follows for $\epsilon>0$ that 
\begin{equation*}
	\NC( \epsilon, \FC\powerC, \norm{\cdot}_\infty) \leq \NC( \epsilon, \FC, \norm{\cdot}_\infty).
\end{equation*}
\end{lemma}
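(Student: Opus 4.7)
The plan is to prove that the $c$-transformation acts as a $1$-Lipschitz operation with respect to the uniform norm, and then push forward any $\epsilon$-cover of $\FC$ through this operation to obtain an $\epsilon$-cover of $\FC^c$ of the same cardinality.

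First I would establish the key inequality: for any two bounded functions $f_1,f_2\,\colon\XC\to\RR$ and any $y\in\YC$,
\begin{equation*}
f_1^c(y) - f_2^c(y) = \inf_{x\in\XC}\bigl(c(x,y)-f_1(x)\bigr) - \inf_{x\in\XC}\bigl(c(x,y)-f_2(x)\bigr) \leq \sup_{x\in\XC}\bigl(f_2(x)-f_1(x)\bigr) \leq \norm{f_1-f_2}_\infty,
\end{equation*}
using the elementary fact that $\inf_x a(x) - \inf_x b(x) \leq \sup_x (a(x)-b(x))$. Swapping the roles of $f_1$ and $f_2$ and taking the supremum over $y\in\YC$ yields $\norm{f_1^c - f_2^c}_\infty \leq \norm{f_1 - f_2}_\infty$, which is the $1$-Lipschitz property of the $c$-transform. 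Note that this argument does not require $\FC$ or the covering elements to be $c$-concave, and it works for arbitrary non-empty $\YC$ and arbitrary cost functions with values in $[0,1]$ (indeed in any bounded range, as only differences enter).

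Next I would leverage this Lipschitz property to transfer covers. Let $n \coloneqq \NC(\epsilon, \FC, \norm{\cdot}_\infty)$; if $n=\infty$ the claim is trivial, so assume $n$ is finite. By definition, there exist functions $f_1,\dots,f_n\,\colon\XC\to\RR$ such that for every $f\in\FC$ there is some index $i$ with $\norm{f-f_i}_\infty \leq \epsilon$. I then propose $\{f_1^c,\dots,f_n^c\}$ as a candidate $\epsilon$-cover of $\FC^c$. Given any $g\in\FC^c$, write $g=f^c$ for some $f\in\FC$, pick $i$ with $\norm{f-f_i}_\infty \leq \epsilon$, and conclude via the Lipschitz property that $\norm{g - f_i^c}_\infty = \norm{f^c - f_i^c}_\infty \leq \epsilon$. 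Taking the infimum over admissible covers of $\FC$ yields the desired inequality.

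I do not anticipate any real obstacle: the entire argument hinges on the one-line manipulation of infima, and the transfer of the cover is purely formal. The only point worth flagging is that the covering functions $f_i$ are not required to lie in $\FC$, so their $c$-transforms $f_i^c$ need not be $c$-concave or even bounded in a controlled way; this is harmless because the covering number in the statement allows arbitrary real-valued covering elements on the domain $\YC$.
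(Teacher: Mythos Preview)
Your proposal is correct and follows essentially the same approach as the paper: both arguments show that the $c$-transform is $1$-Lipschitz in the uniform norm and then push forward an $\epsilon$-cover of $\FC$ to an $\epsilon$-cover of $\FC^c$ via $f_i \mapsto f_i^c$. The paper carries out the infimum manipulation inline rather than isolating the Lipschitz property as a separate claim, but the content is identical.
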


\begin{proof}
  If $N \coloneqq \NC(\epsilon,\FC, \norm{\cdot}_\infty) = \infty$, the claim is
  trivial, so assume $N < \infty$. Let $\{ f_1, \dots,  f_N\}$ be an
  $\epsilon$-covering for $\FC$ with respect to the uniform norm on $\XC$.  
  For $f \in \FC$, consider $f_i$ such that $\norm{f
  - f_i}_\infty \leq \epsilon$. Since $f$ and $c$ are both bounded,
  it follows that the $c$-transform $f\powerC$ is bounded on $\YC$.
  For all $y \in \YC$, we obtain
  \begin{align*}
		f\powerC(y) &= \inf_{x \in \XC} c(x,y) - f(x)
    =
    \inf_{x \in \XC} c(x,y) - f_i(x) + f_i(x) - f(x) \\
		&\hspace{-0.5em}\begin{cases}
			\leq
      \inf_{x \in \XC} c(x,y) - f_i(x)
      + \sup_{x \in \XC} | f_i(x) - f(x)|  \leq f_i\powerC(y) + \epsilon\\
			\geq
      \inf_{x \in \XC} c(x,y) - f_i(x)
      - \sup_{x \in \XC} | f_i(x) - f(x)| \geq f_i\powerC(y) - \epsilon,
		\end{cases}
	\end{align*}
  which implies $\sup_{y \in \YC}|f\powerC(y) -f_i\powerC(y)  | \leq \epsilon$. Thus, $\{ f_1\powerC, \dots,  f_N\powerC\}$ is an $\epsilon$-covering of $\FC^c$ with respect to the uniform norm.
\end{proof}

Since any feasible $c$-concave $f \in \FCot$ fulfills $f =  f^{cc}$ \cite[Proposition~1.34]{santambrogio2015optimal}, we conclude that the uniform metric entropies of the function classes $\FCot$ and $\FCot\powerC$ are identical for any covering radius $\epsilon>0$, see \eqref{eq:IdenticalMetricEntropy}.
Hence, to control the complexity of both function classes simultaneously, it suffices to upper bound only one of them. 
In particular, in a concrete setting where different bounds for $\FCot$ and $\FCot\powerC$ are available, their ($\epsilon$-wise) minimum can be employed to derive an upper bound for the convergence rate in \Cref{thm:AbstractUpperBound} below.
Methods to control the uniform metric entropy of $\FCot$ and $\FCot\powerC$ typically exploit regularity properties of the underlying spaces $\XC$ and $\YC$ as well as the ground cost $c$ (cf.\ \Cref{sec:Applications}).

\subsection{LCA: Dual Perspective} \label{sec:LowerComplexityAdapation}
The observation that $\FCot$ and $\FCot\powerC$ have identical uniform metric entropies implies the following upper bound on the convergence of the empirical estimators $\OTn$ in~\eqref{eq:EmpiricalEstimator}, demonstrating the LCA principle. Notably, in the two-sample case $\OTn = \OT_c(\hat\mu_n, \hat\nu_n)$, the statement holds irregardless of the dependency structure between the empirical measures $\hat\mu_n$ and $\hat\nu_n$. 

\begin{theorem}[General LCA]\label{thm:AbstractUpperBound}
  Let $\XC$ and $\YC$ be Polish spaces and let $c\,\colon \XC\times \YC\rightarrow[0, 1]$ be continuous. Consider the function class $\FCot$ from \eqref{eq:NiceTransforms} and assume that there exist $k >0$, $K>0$, and $\epsilon_0 \in (0, 1]$ such that the uniform metric entropy of $\FCot$ is bounded by
      \begin{equation}\label{eq:BoundOnUniformEntropy}
        \log \NC\big(\epsilon, \FCot, \norm{\cdot}_{\infty}\big)
        \leq K \epsilon^{-k}
        \qquad\text{for }
      0 < \epsilon \le \epsilon_0.
       \end{equation}
	Then, for any $\mu\in\PC(\XC)$ and $\nu\in\PC(\YC)$, the empirical estimator $\OTn$ from \eqref{eq:EmpiricalEstimator} satisfies
  \begin{equation}\label{eq:AbstractUpperBound}
    \EV{\big| \OTn - \OT_c(\mu, \nu)\big|}
    \lesssim
    \begin{cases}
      n^{-1/2}        & \text{if } k < 2,\\
      n^{-1/2}\log(n) & \text{if } k = 2,\\
      n^{-1/k}        & \text{if } k > 2,\\
    \end{cases}
  \end{equation}
  where the implicit constant only depends on $k$, $K$, and $\epsilon_0$.
\end{theorem}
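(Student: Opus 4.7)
The plan is to reduce, via the dual formulation \eqref{eq:ExistenceOTPotential}, the estimation error to suprema of empirical processes indexed by $\FCot$ and $\FCot^c$, and then invoke a dyadic chaining argument under the assumed uniform metric entropy bound. By duality and the elementary inequality $|\sup_f A(f) - \sup_f B(f)| \le \sup_f |A(f) - B(f)|$,
\begin{equation*}
|\OT_c(\hat\mu_n, \hat\nu_n) - \OT_c(\mu, \nu)|
\le \sup_{f \in \FCot} \Bigl| \int f \, \dif (\hat\mu_n - \mu) \Bigr|
+ \sup_{g \in \FCot^c} \Bigl| \int g \, \dif (\hat\nu_n - \nu) \Bigr|,
\end{equation*}
and the two one-sample estimators drop the corresponding term. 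Since $f = f^{cc}$ for every $f \in \FCot$, \Cref{thm:coveringUnderCTransforms} implies that $\FCot$ and $\FCot^c$ share identical uniform metric entropies, so the hypothesis \eqref{eq:BoundOnUniformEntropy} controls both summands simultaneously. It therefore suffices to bound $\EV{\sup_{f \in \FC}|\int f\, \dif(\hat\sigma_n - \sigma)|}$ for a generic class $\FC$ of $[-1,1]$-valued measurable functions with $\log \NC(\epsilon, \FC, \|\cdot\|_\infty) \le K\epsilon^{-k}$ for $\epsilon \in (0,\epsilon_0]$, where $\hat\sigma_n$ is the empirical distribution of $n$ i.i.d.\ samples from $\sigma$.

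Next, I would apply dyadic chaining at scales $\epsilon_j = \epsilon_0 2^{-j}$. Picking minimal uniform $\epsilon_j$-covers of cardinality $N_j$ and letting $\pi_j(f)$ denote a nearest covering element, we have $\|\pi_j(f) - \pi_{j-1}(f)\|_\infty \le 3\epsilon_j$. Telescoping the chain through a truncation level $j_n$, bounding the at most $N_j^2$ possible increments at level $j$ by Hoeffding's maximal inequality for bounded summands, and controlling the residual approximation $\|f - \pi_{j_n}(f)\|_\infty \le \epsilon_{j_n}$ deterministically, yields
\begin{equation*}
\EV{\sup_{f \in \FC}\Bigl|\int f \, \dif(\hat\sigma_n - \sigma)\Bigr|}
\lesssim \sum_{j=0}^{j_n} \epsilon_j \sqrt{\log N_j / n} + \epsilon_{j_n}
\lesssim \sum_{j=0}^{j_n} \epsilon_j^{1-k/2} / \sqrt{n} + \epsilon_{j_n}.
\end{equation*}
For $k<2$ the geometric sum converges as $j_n \to \infty$, delivering the parametric rate $n^{-1/2}$. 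For $k=2$ the sum grows linearly in $j_n$; choosing $\epsilon_{j_n} \asymp n^{-1/2}$ (so $j_n \asymp \log n$) produces the extra $\log n$ factor. For $k>2$ the sum is dominated by its last term $\epsilon_{j_n}^{1-k/2}/\sqrt n$, and balancing it against $\epsilon_{j_n}$ via $\epsilon_{j_n}^{k/2} \asymp n^{-1/2}$ gives $\epsilon_{j_n} \asymp n^{-1/k}$ and thus the rate $n^{-1/k}$.

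The most delicate step is the chaining in the regime $k \ge 2$, where Dudley's entropy integral diverges and the truncation level must be chosen carefully to equalize the deviation and approximation contributions; a naive single-scale approximation combined with a union bound would yield only $n^{-1/(k+2)}$ and fall short of the claimed $n^{-1/k}$, so the multi-scale structure is essential. A minor subtlety is that in the two-sample case $\hat\mu_n$ and $\hat\nu_n$ need not be independent, but this is irrelevant because the two empirical process suprema are bounded separately. All constants depend only on $k$, $K$, and $\epsilon_0$, since every step above uses $\FC$ solely through these parameters.
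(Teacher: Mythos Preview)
Your proof is correct and follows essentially the same strategy as the paper: reduce via duality to empirical process suprema over $\FCot$ and $\FCot^c$, transfer the entropy bound to $\FCot^c$ via \Cref{thm:coveringUnderCTransforms}, and then chain under the assumed entropy growth. The only cosmetic difference is that the paper passes through symmetrization and invokes Dudley's entropy integral bound on the Rademacher complexity as a black box, whereas you carry out the dyadic chaining by hand with Hoeffding's inequality---which is exactly how that bound is proved.
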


When interpreting this statement, one should keep in mind that it is always possible to assume $\XC = \supp(\mu)$ and $\YC = \supp(\nu)$ if this leads to improved bounds for the uniform metric entropy of $\FCot$ (or equivalently $\FCot\powerC$). In this sense, \Cref{thm:AbstractUpperBound} can be tailored to take advantage of intrinsic properties of (the support of) $\mu$ (or $\nu$).
The proof employs arguments from empirical process theory and generalizes the technique of \cite{sriperumbudur2012empirical} and \cite{chizat2020}, where upper bounds for bounded convex sets $\XC = \YC\subseteq \RR^d$ and Euclidean costs $c(x,y) = \norm{x-y}^p$ for $p =1$ and $p = 2$ were derived.

\begin{proof}[Proof of Theorem~\ref{thm:AbstractUpperBound}] 
We first consider $\OTn = \OT_c(\hat \mu_n, \hat\nu_n)$. The definition of $\FCot$ implies
\begin{align*}
	\OT_c(\hat \mu_n, \hat\nu_n) - \OT_c(\mu, \nu) &= \max_{ f \in \FCot}\left( \int_{\XC}  f \dif \hat \mu_n + \int_{\YC}  f\powerC \dif \hat \nu_n\right)  - \max_{ f \in \FCot}\left( \int_{\XC}  f\dif \mu + \int_{\YC}  f\powerC \dif \nu \right)\\
	&\hspace{-0.5em}\begin{cases}
		\leq \phantom{-}\sup_{ f \in \FCot}\int_{\XC} f \,\dif (\hat \mu_n - \mu) + \int_{\YC}  f\powerC \dif ( \hat\nu_n - \nu),\\[.5ex]
		\geq -\sup_{ f\in \FCot}\int_{\XC}  f\,\dif (\mu - \hat \mu_n ) + \int_{\YC}  f\powerC \dif  ( \nu-\hat\nu_n ),
	\end{cases}
\end{align*}
and hence
\begin{align}\label{eq:UpperBoundEmpOT}
	|\OT_c(\hat \mu_n, \hat\nu_n)  - \OT_c(\mu, \nu)| \leq \sup_{ f \in \FCot}\left|\int_{\XC}  f \,\dif (\hat \mu_n - \mu)\right| + \sup_{ f\powerC \in \FCot\powerC}\left|\int_{\YC}  f\powerC \dif ( \hat\nu_n - \nu)\right|.
\end{align}
Note by \Cref{thm:coveringUnderCTransforms} that both $\FCot$ and $\FCot\powerC$ have finite uniform metric entropy for any $\epsilon>0$. Hence, both function classes contain subsets of at most countable cardinality that are dense in uniform norm, and the right hand side of \eqref{eq:UpperBoundEmpOT} can thus be considered as a countable supremum and is therefore measurable. 
Further, recall that all elements in $\FCot$ and $\FCot^c$ are absolutely bounded by one.
Taking the expectation and invoking symmetrization techniques (see \citealt[Proposition~4.11]{wainwright2019high}), we obtain
  \begin{align*}
   \EV{\big| \OT_c(\hat \mu_n, \hat\nu_n)  - \OT_c(\mu, \nu)\big|} 
    &\leq \EV{\sup_{ f \in \FCot}\left|\int_{\XC}  f \,\dif (\hat \mu_n - \mu)\right| }
    +  \EV{\sup_{ f\powerC \in \FCot\powerC}\left|\int_{\YC}  f\powerC \dif ( \hat\nu_n - \nu)\right|} \\
    &\leq 
    2\big(\RC_n(\FCot) + \RC_n(\FCot\powerC)\big),
  \end{align*}
  where $\RC_n(\FCot)$ and $\RC_n(\FCot\powerC)$ denote the Rademacher complexities of the function classes $\FCot$ and $\FCot\powerC$. The Rademacher complexity of $\FCot$ is defined by
  \begin{equation*}
  \RC_n(\FCot) \coloneqq \EV{\sup_{f\in \FCot}\left|\frac{1}{n}\sum_{i =1}^{n} \sigma_i f(X_i)\right| },	
  \end{equation*}
  for i.i.d.\ $X_1, \dots, X_n \sim \mu$ and independent i.i.d.\ Rademacher variables $\sigma_1, \dots, \sigma_n\sim \uniform\{-1,1\}$.
  This quantity is dominated by Dudley's entropy integral (see \citealt[Theorem~16]{luxburg2004distance}),
  \begin{equation*}
    \RC_n(\FCot)
    \leq 
    \inf_{\delta\in[0, 1]}
    \left(2\delta + \sqrt{32}\,n^{-1/2} \int_{\delta/4}^{1} \sqrt{\log\,\NC(\epsilon, \FCot, \norm{\cdot}_{\infty})}\,\dif\epsilon\right).
  \end{equation*}
  Let $\tilde{K} \coloneqq \sqrt{32\,K}$. Since the covering number is a decreasing function in $\epsilon$, a short calculation shows that the assumption $\log \NC(\epsilon, \FCot, \|\cdot\|_\infty) \leq K \epsilon^{-k}$ for $\epsilon \leq \epsilon_0$ implies that
  \begin{align*}
    \RC_n(\FCot)\leq \;&
    \inf_{\delta\in [0,1]}\left(2\delta +\tilde{K} n^{-1/2} \int_{\delta/4}^{1} \min(\epsilon, \epsilon_0)^{-k/2} \dif\epsilon\right)\\
     \leq \;&
    \begin{cases}
     \tilde{K}\left( \frac{\epsilon_0^{1-k/2}}{1 - k/2} + \frac{1 - \epsilon_0}{\epsilon_0^{k/2}} \right) n^{-1/2}        & \text{ if }  k < 2 \text{ for } \delta = 0,\\
    \left(8+ \tilde{K}\log(\epsilon_0) + \frac{\tilde{K}(1 - \epsilon_0)}{\epsilon_0} \right)  n^{-1/2}  +  \frac{\tilde{K}}{2}n^{-1/2}\log(n)& \text{ if }  k = 2 \text{ for } \delta = 4n^{-1/2},\\
 	 \tilde{K} \left(\frac{\epsilon_0^{1-k/2}}{1 - k/2} + \frac{1 - \epsilon_0}{\epsilon_0^{k/2}} \right)   n^{-1/2}   +\left(8 + \frac{\tilde{K}}{k/2-1}\right)  n^{-1/k}       & \text{ if }  k > 2 \text{ for } \delta = 4n^{-1/k}, 
    \end{cases}
   \end{align*}
   where we assume $n$ large enough such that $\delta \leq \epsilon_0$ for the respective choices. As $\FCot$ and $\FCot\powerC$ share the same covering number with respect to uniform norm (\Cref{thm:coveringUnderCTransforms}) and since all functions in $\FCot\powerC$ are bounded in absolute value by one as well, it follows that $\RC_n(\FCot\powerC)$ can be bounded just like $\RC_n(\FCot)$. 
  Finally, for the other estimators $\OTn$ in \eqref{eq:EmpiricalEstimator}, we obtain
  \begin{equation*}
    \EV{\big| \OT_c(\hat \mu_n, \nu)  - \OT_c(\mu, \nu)\big|} \leq  \RC_n(\FCot)
    \qquad\text{and}\qquad
    \EV{\big| \OT_c(\mu, \hat\nu_n)  - \OT_c(\mu, \nu)\big|} \leq  \RC_n(\FCot\powerC)
  \end{equation*}
  in analogous fashion, which proves the bounds from \eqref{eq:AbstractUpperBound} and finishes the proof.
\end{proof}

\subsection{LCA: Primal Perspective}\label{sec:LCA_primal}

The proof of \Cref{thm:AbstractUpperBound} relies on a technical observation about the nature of $c$-transforms and does not convey a geometric interpretation why empirical OT should follow the LCA principle. To provide some additional intuition, we next consider the LCA phenomenon from the primal perspective. Even though this approach yields less general results, its more explicit character has benefits, e.g., for establishing matching lower bounds.

\begin{proposition}[Decomposition under additive costs]\label{prop:LCAadditiveCosts}
  Let $\XC$ be a Polish space and $\YC = \YC_1\times\YC_2$ be the product of two Polish spaces. Let $c\,\colon \XC \times \YC \to [0,1]$ be continuous so that
  \begin{equation}\label{eq:cartesianCosts}
    c(x,y) = c_1(x, y_1) + c_2(y_2)
  \end{equation}
  for all $x\in \XC$ and $y = (y_1,y_2) \in \YC$ with continuous $c_1 \,\colon \XC\times \YC_1 \rightarrow [0, 1]$ and $c_2\,\colon \YC_2\rightarrow [0, 1]$, and let
 $\pf\,\colon \YC \to \YC_1$ be the Cartesian projection to $\YC_1$.
 Then, for any $\mu\in\PC(\XC)$ and $\nu\in\PC(\YC)$,
\begin{equation}
    \OT_c(\mu, \nu) = \OT_{c_1}(\mu, \pf_{\#}\nu) + R_{c_2}(\nu), \label{eq:CartesianDecomposition}
\end{equation}
where $R_{c_2}(\nu) := \int_{\YC} c_2(y_2)\,\dif\nu(y_1, y_2)$.
\end{proposition}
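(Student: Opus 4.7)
The plan is to use the primal formulation of OT directly and exploit the fact that the term $c_2(y_2)$ is "coupling-independent": its integral against any coupling $\pi\in\Pi(\mu,\nu)$ only depends on the $y_2$-marginal of $\pi$, which is fixed by $\nu$. Concretely, for every $\pi\in\Pi(\mu,\nu)$ one has
\begin{equation*}
  \int_{\XC\times\YC} c\,\dif\pi = \int_{\XC\times\YC_1} c_1(x,y_1)\,\dif\pi_1(x,y_1) + R_{c_2}(\nu),
\end{equation*}
where $\pi_1$ denotes the pushforward of $\pi$ under $(x,y_1,y_2)\mapsto(x,y_1)$. Therefore \eqref{eq:CartesianDecomposition} reduces to the identity
\begin{equation*}
  \inf_{\pi\in\Pi(\mu,\nu)} \int_{\XC\times\YC_1} c_1\,\dif\pi_1 = \OT_{c_1}(\mu,\pf_{\#}\nu),
\end{equation*}
which I would establish by proving two inequalities.

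For the inequality $\ge$, I would simply note that if $\pi\in\Pi(\mu,\nu)$, then the pushforward $\pi_1$ lies in $\Pi(\mu,\pf_{\#}\nu)$: its first marginal is $\mu$ (since the $\XC$-marginal of $\pi$ is $\mu$) and its second marginal is $\pf_{\#}\nu$ by construction. Hence $\int c_1\,\dif\pi_1\ge\OT_{c_1}(\mu,\pf_{\#}\nu)$, and taking the infimum over $\pi$ yields one direction.

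For the inequality $\le$, I would use a gluing/disintegration argument. Since $\YC_1,\YC_2$ are Polish, the measure $\nu$ disintegrates as $\dif\nu(y_1,y_2)=\dif(\pf_{\#}\nu)(y_1)\,\dif\nu_{y_1}(y_2)$ via a regular conditional kernel $\{\nu_{y_1}\}_{y_1\in\YC_1}$. Given any $\tilde\pi\in\Pi(\mu,\pf_{\#}\nu)$, I would define
\begin{equation*}
  \dif\pi(x,y_1,y_2) \coloneqq \dif\tilde\pi(x,y_1)\,\dif\nu_{y_1}(y_2)
\end{equation*}
and verify that $\pi\in\Pi(\mu,\nu)$: the $x$-marginal is $\mu$ because it already is under $\tilde\pi$, and the $(y_1,y_2)$-marginal reconstructs $\nu$ by the disintegration formula. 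By Fubini, $\int c_1\,\dif\pi=\int c_1\,\dif\tilde\pi$, so taking the infimum over $\tilde\pi$ yields the reverse inequality.

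The only delicate step is the disintegration used in the $\le$ direction; once that is granted (which it is because $\YC_1,\YC_2$ are Polish and $c_1,c_2$ are measurable and bounded, so that Fubini applies freely), the rest is a direct computation. No appeal to duality is needed, and indeed the argument gives geometric content to the LCA principle: under additively separable costs, the $\YC_2$-component of $\nu$ is transported "for free" to itself, and all genuine transport happens between $\mu$ and $\pf_{\#}\nu$ in the potentially lower-dimensional space $\XC\times\YC_1$.
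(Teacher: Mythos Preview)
Your proposal is correct and follows essentially the same route as the paper: both split off the coupling-independent term $R_{c_2}(\nu)$, then identify $\inf_{\pi\in\Pi(\mu,\nu)}\int c_1\,\dif\pi$ with $\OT_{c_1}(\mu,\pf_{\#}\nu)$ via the pushforward $\pi\mapsto(\id,\pf)_{\#}\pi$ in one direction and a gluing/disintegration argument in the other. The only cosmetic difference is that the paper invokes the gluing lemma by name, whereas you spell out the disintegration explicitly.
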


For example, this statement can be applied in Euclidean
spaces under $l_p^p$ costs if $\XC \subset \YC_1$.  
In this case, we find $c(x, y) = \|x - y_1\|_p^p + \|y_2\|_p^p$ (see also
\Cref{fig:cartesian} in the introduction), which satisifies
condition~\eqref{eq:cartesianCosts}. If $p = 2$, a relation of the form
\eqref{eq:CartesianDecomposition} clearly remains valid whenever $\XC$ is
contained in an affine linear subspace of $\YC$ and $\pf$ is the corresponding
orthogonal projection.
 
\begin{proof}[Proof of \Cref{prop:LCAadditiveCosts}]
 For any coupling $\pi\in\Pi(\mu, \nu)$, we consider the decomposition
\begin{equation*}
  \int_{\XC\times \YC} c\,\dif\pi = \int_{\XC\times \YC} c_1 \,\dif\pi + \int_{\XC\times \YC} c_2\,\dif\pi. 
\end{equation*}
The second term on the right is independent of $\pi$ and equals $R_{c_2}(\nu) := \int_{\YC} c_2(y_2)\,\dif\nu(y_1, y_2)$, while the first term can be
rewritten in terms of $\tilde\pi = (\id, \pf)_{\#}\pi \in \Pi(\mu, \pf_{\#}\nu)$ by a change of variables, such that $\int_{\XC\times \YC}  c_1\,\dif\pi = \int_{\XC\times \YC_1} c_1\,\dif\tilde\pi$.
Conversely, the gluing lemma (cf. \citealt[Chapter~1]{villani2008optimal}) implies that each $\tilde\pi \in \Pi(\mu, \pf_{\#}\nu)$ gives rise to
a $\pi\in\Pi(\mu, \nu)$ for which $\int_{\XC\times \YC}  c_1\,\dif \pi = \int_{\XC\times \YC_1}
c_1\,\dif\tilde\pi$ holds as well. Therefore, we conclude that
\begin{align}
  \OT_c(\mu, \nu)
  &=
  \inf_{\pi\in\Pi(\mu, \nu)} \int_{\XC\times \YC} c_1\,\dif\pi + \int_{\XC\times \YC} c_2\,\dif\pi \notag\\
  &=
  \inf_{\tilde\pi\in\Pi(\mu, \pf_{\#}\nu)} \int_{\XC\times \YC_1} c_1\,\dif\tilde\pi + R_{c_2}(\nu) 
  = 
  \OT_{c_1}(\mu, \pf_{\#}\nu) + R_{c_2}(\nu).\;\;\qedhere 
\end{align}
\end{proof}
Since relation \eqref{eq:CartesianDecomposition} in \Cref{prop:LCAadditiveCosts} holds for any pair of probability measures $\mu\in \PC(\XC)$ and $\nu\in\PC(\YC)$, we
obtain
\begin{equation}\label{eq:RelationEmpiricalMeasuresAdditiveCosts}
	\OT_c(\hat \mu_n,\hat \nu_n) - \OT_c(\mu, \nu) = \OT_{c_1}(\hat\mu_n, \pf_{\#}\hat\nu_n) -  \OT_{c_1}(\mu, \pf_{\#}\nu) + R_{c_2}(\hat\nu_n - \nu),
\end{equation}
where $\hat\nu_n - \nu$ is understood as a signed measure.
Thus, the statistical performance when estimating $\OT_c(\mu, \nu)$ via the empirical OT cost is governed by the (potentially much simpler) complexity of $\OT_{c_1}(\mu, \pf_{\#}\nu)$.
By the reverse triangle inequality, it furthermore follows that
\begin{equation}\label{eq:LowerBoundTool1}
  \EV{|\OT_c(\hat \mu_n,\hat \nu_n) - \OT_c(\mu, \nu)|} \geq \EV{\left||\OT_{c_1}(\hat\mu_n, \pf_{\#}\hat\nu_n) -  \OT_{c_1}(\mu, \pf_{\#}\nu)| - |R_{c_2}(\hat\nu_n - \nu)|\right|}, 
\end{equation}
where we note $\EV{|R_{c_2}(\hat\nu_n - \nu)|}\asymp n^{-1/2}$ if $\sigma_{c_2}^2\coloneqq \Var_{Y\sim \nu}[c_2(Y)]>0$. Let $\hat f_n \in \FC_{c_1}$ denote an optimizer for \eqref{eq:ExistenceOTPotential} between $\hat \mu_n$ and $\pf_{\#}\nu$ under the cost function $c_1$.
If the i.i.d.\ random variables $X_1, \dots, X_n\sim \mu$ and $Y_1, \dots, Y_n\sim \nu$ are independent, we find
\begin{align}
  &\EV{\OT_{c_1}(\hat\mu_n, \pf_{\#}\hat\nu_n) -  \OT_{c_1}(\hat\mu_n, \pf_{\#}\nu)} \notag
  \\
  &\qquad\qquad\qquad= \EV{ \max_{f \in \FC_{c_1}} \left(\int_{\XC} f \dif \hat\mu_n +  \int_{\YC_1} f^{c_1} \dif \pf_{\#}\hat\nu_n\right) - \max_{f \in \FC_{c_1}}\left(\int_{\XC} f \dif \hat\mu_n +  \int_{\YC_1} f^{c_1} \dif \pf_{\#}\nu \right) }\notag\\
  &\qquad\qquad\qquad\geq \mathbb{E}_{X_1, \dots, X_n}\left[ \mathbb{E}_{Y_1, \dots, Y_n}\left[ \int_{\YC_1} \hat f_n^{c_1} \dif (\pf_{\#}\hat\nu_n - \pf_{\#} \nu) \,\Big|\, X_1, \dots, X_n\right] \right] \notag\\
  &\qquad\qquad\qquad= 0, \notag 
\end{align}
where independence is crucial for the final equality.
In particular, this implies  
 \begin{align}
 	\EV{|\OT_{c_1}(\hat\mu_n, \pf_{\#}\hat\nu_n) -  \OT_{c_1}(\mu, \pf_{\#}\nu)|} &\geq \EV{\OT_{c_1}(\hat\mu_n, \pf_{\#}\hat\nu_n) -  \OT_{c_1}(\mu, \pf_{\#}\nu)} \notag\\
 &\geq \EV{\OT_{c_1}(\hat\mu_n, \pf_{\#}\nu) -  \OT_{c_1}(\mu, \pf_{\#}\nu)}, \label{eq:LowerBoundTool2}
 \end{align}
which means that lower bounds for the two-sample setting can be obtained from lower bounds for the one-sample case.
In Examples~\ref{ex:LowerBoundsLipschitzcosts} and \ref{ex:LowerBoundsSemiconcavecosts} of the subsequent section, we employ relation \eqref{eq:LowerBoundTool1} and \eqref{eq:LowerBoundTool2} to this end. 

\begin{remark}[Dependent empirical measures] \label{rem:Dependencies} 
Dependencies between the empirical measures $\hat\mu_n$ and $\hat\nu_n$ can lead to parametric convergence rates of order $n^{-1/2}$ irregardless of the underlying spaces $\YC_1$ and $\YC_2$. For example, if $\XC = \YC_1$ and $\mu = \pf_{\#}\nu$ with empirical measures related by $\hat\mu_n = \pf_\#\hat\nu_n$, it follows from \eqref{eq:RelationEmpiricalMeasuresAdditiveCosts} that
\begin{equation*}
  \EV{\big|\OT_c(\hat \mu_n,\hat \nu_n) - \OT_c(\mu, \nu)\big|} = \EV{\big|R_{c_2}(\hat\nu_n - \nu)\big|} \asymp  n^{-1/2}
\end{equation*}
for any non-negative cost function $c_1$ with $c_1(y_1,y_1)= 0$ for all $y_1\in \YC_1$ if $\sigma_{c_2}>0$.
\end{remark}

\section{Applications and Examples}\label{sec:Applications}

The LCA principle, as formalized in \Cref{thm:AbstractUpperBound}, can readily be employed whenever suitable bounds on the uniform metric entropy of $\FCot$ are available. In the following, we consider a number of settings where well-known entropy bounds lead to novel results on the convergence rate of empirical OT. In order to efficiently exploit the properties of the space $\XC$ in settings of low intrinsic dimensionality, the following observation is useful.

\begin{lemma}[Union bound]\label{lem:MetricEntropyUnionBound}
Let $\FC$ be a class of real valued functions on a set $\XC = \bigcup_{i =1}^{I}\XC_i$ for (not necessarily disjoint) subsets $\XC_i\subset\XC$ and $I \in \NN$, and let $\FC|_{\XC_i}\coloneqq \big\{ f|_{\XC_i} \,\colon \XC_i\rightarrow \RR\,|\,f\in\FC \big\}$ be the class of functions restricted to $\XC_i$ for all $i\in\{1, \ldots, I\}$. Then, for each $\epsilon>0$,
\begin{equation*}
	\log\NC(\epsilon, \FC, \norm{\cdot}_\infty)\leq \sum_{i=1}^{I}\log\NC(\epsilon, \FC|_{\XC_i}, \norm{\cdot}_\infty).
\end{equation*}
\end{lemma}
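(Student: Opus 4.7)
The plan is to produce an explicit $\epsilon$-covering of $\FC$ with cardinality at most $\prod_{i=1}^{I} N_i$, where $N_i \coloneqq \NC(\epsilon, \FC|_{\XC_i}, \norm{\cdot}_\infty)$; the advertised bound follows by taking logarithms. The cases where some $N_i = \infty$ are trivial because the right-hand side is then infinite, so I may assume all $N_i < \infty$. The core observation is that covering functions in the definition of $\NC$ may be arbitrary real-valued functions on the domain (they need not lie in the class itself), which gives the freedom to \emph{glue} coverings across disjoint pieces without any loss of radius.

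First I would disjointify the $\XC_i$ by setting $\tilde\XC_1 \coloneqq \XC_1$ and $\tilde\XC_i \coloneqq \XC_i \setminus \bigcup_{j<i} \XC_j$ for $i \geq 2$, so that $\XC = \bigsqcup_{i=1}^I \tilde\XC_i$ and $\tilde\XC_i \subseteq \XC_i$. For each $i$, pick an $\epsilon$-covering $\{g_{i,1},\dots,g_{i,N_i}\}$ of $\FC|_{\XC_i}$ in uniform norm. Since $\tilde\XC_i \subseteq \XC_i$, the restriction map $h \mapsto h|_{\tilde\XC_i}$ only decreases uniform distances, so $\{g_{i,k}|_{\tilde\XC_i}\}_{k=1}^{N_i}$ is an $\epsilon$-covering of $\FC|_{\tilde\XC_i}$. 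Then, for each multi-index $\mathbf{j} = (j_1,\dots,j_I) \in \prod_{i=1}^{I}\{1,\dots,N_i\}$, I define a single function $g_{\mathbf{j}}\colon \XC \to \RR$ by $g_{\mathbf{j}}(x) \coloneqq g_{i,j_i}(x)$ whenever $x \in \tilde\XC_i$, which is well-defined because the $\tilde\XC_i$ partition $\XC$.

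It remains to check that $\{g_{\mathbf{j}}\}_{\mathbf{j}}$ is an $\epsilon$-covering of $\FC$. For any $f \in \FC$ and each $i$, choose $j_i$ with $\norm{f|_{\XC_i} - g_{i,j_i}}_\infty \leq \epsilon$, which is possible by construction. Then
\begin{equation*}
\norm{f - g_{\mathbf{j}}}_\infty
= \max_{1 \leq i \leq I} \sup_{x \in \tilde\XC_i} |f(x) - g_{i,j_i}(x)|
\leq \max_{1 \leq i \leq I} \norm{f|_{\XC_i} - g_{i,j_i}}_\infty
\leq \epsilon,
\end{equation*}
so $\NC(\epsilon, \FC, \norm{\cdot}_\infty) \leq \prod_{i=1}^I N_i$, and taking logarithms closes the argument.

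There is no genuine obstacle here; the only pitfall to watch out for is the temptation to choose the covering functions for $\FC$ inside $\FC$ itself (for instance, by selecting an $f_{\mathbf{j}} \in \FC$ compatible with each multi-index), which would force a triangle-inequality step and yield a suboptimal radius of $2\epsilon$. Working with gluings of the arbitrary covering functions $g_{i,k}$ on the disjointified pieces $\tilde\XC_i$ avoids this issue entirely and preserves the radius exactly.
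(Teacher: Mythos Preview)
Your proof is correct and follows essentially the same approach as the paper: disjointify the $\XC_i$ into $\tilde\XC_i$, take $\epsilon$-coverings of each $\FC|_{\XC_i}$, and glue the covering functions on the disjoint pieces to obtain an $\epsilon$-covering of $\FC$ of cardinality at most $\prod_i N_i$. Your version is somewhat more explicit in the verification step and includes the helpful remark about not needing the covering functions to lie in $\FC$, but the argument is the same.
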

\begin{proof}
	Suppose that the right hand side is finite (otherwise the bound is trivial). Denote by ${\FC}_i$ a minimal $\epsilon$-covering of $\FC|_{\XC_i}$ with respect to the uniform norm. Let $\tilde \XC_1 \coloneqq \XC_1$ and define $\tilde \XC_i \coloneqq \XC_i \backslash \left(\bigcup_{j= 1}^{i-1} \XC_j\right)$ for $i \geq 2$. Then $\big\{ \sum_{i=1}^{I}f_i \cdot \mathds{1}_{\tilde \XC_i}  \,|\, f_i \in \FC_i\big\}$ is an $\epsilon$-covering of $\FC$ under the uniform norm with cardinality at most $\smash{\prod_{i=1}^{I} \NC(\epsilon, \FC|_{\XC_i}, \norm{\cdot}_\infty)}$. 
\end{proof}

\subsection{Semi-Discrete Optimal Transport}\label{sec:Semidiscrete}

We first address the setting of \emph{semi-discrete} OT, where $\XC = \{x_1, \dots, x_I\}$ is a finite discrete space with $I\in\NN$ elements.
Structural and computational properties of semi-discrete OT have been investigated extensively, especially in Euclidean contexts (see, e.g., \citealt{aurenhammer1998minkowski,merigot2011multiscale, geiss2013optimally,hartmann2020semi}). 
For our purposes, consider a general Polish space $\YC$ and a continuous cost function $c\,\colon \XC\times \YC\rightarrow [0,1]$. By definition \eqref{eq:NiceTransforms}, the function class $\FCot$ is absolutely bounded by one and we find that
\begin{equation}\label{eq:MetricEntropyDiscrete}
   \NC(\epsilon, \FCot|_{\{x_i\}},  \norm{\cdot}_{\infty}) \leq \left\lceil1 /\epsilon\right\rceil
\end{equation}
for any $\epsilon > 0$ and $i\in\{1,\ldots,I\}$.
Hence, it follows by \Cref{lem:MetricEntropyUnionBound} that $\log \NC( \epsilon, \FCot, \norm{\cdot}_{\infty}) \leq I \log\left\lceil 1/\epsilon\right\rceil \lesssim I /\epsilon$  and we can apply \Cref{thm:AbstractUpperBound} to derive the following bound.

\begin{theorem}[Semi-discrete LCA]\label{thm:SemiDiscreteOT}
   Let $\XC= \{x_1, \dots, x_I\}$ be a finite discrete space, $\YC$ a Polish space, and $c\,\colon
   \XC\times \YC \to [0,1]$ continuous. 
   Then, for any $\mu\in\PC(\XC)$ and $\nu\in\PC(\YC)$, the empirical estimator $\OTn$ from \eqref{eq:EmpiricalEstimator} satisfies
  \begin{align}\label{eq:SemiDiscreteOTBound}
    \EV{\big|\OTn  - \OT_c(\mu, \nu)\big|} \lesssim n^{-1/2}.
  \end{align}
\end{theorem}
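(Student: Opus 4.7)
The plan is to invoke \Cref{thm:AbstractUpperBound} by verifying its uniform metric entropy hypothesis with exponent $k = 1$, which falls into the first regime of \eqref{eq:AbstractUpperBound} and therefore yields the parametric rate $n^{-1/2}$. All the pieces are essentially assembled in the paragraph preceding the theorem, so the proof amounts to stitching them together cleanly.

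First I would record that every $f \in \FCot$ maps into $[-1, 1]$ by the definition \eqref{eq:NiceTransforms}. Consequently, for each singleton $\{x_i\} \subset \XC$ the restriction class $\FCot|_{\{x_i\}}$ can be identified with a subset of $[-1, 1]$, which for any $\epsilon \in (0, 1]$ admits a uniform $\epsilon$-covering of cardinality $\lesssim 1/\epsilon$, i.e.\ the bound \eqref{eq:MetricEntropyDiscrete}. Next, since $\XC = \bigcup_{i=1}^{I} \{x_i\}$ is a finite union, \Cref{lem:MetricEntropyUnionBound} applied to these $I$ singletons gives
\begin{equation*}
  \log \NC(\epsilon, \FCot, \norm{\cdot}_\infty)
  \;\leq\; \sum_{i=1}^{I} \log \NC(\epsilon, \FCot|_{\{x_i\}}, \norm{\cdot}_\infty)
  \;\lesssim\; I \cdot \epsilon^{-1}
\end{equation*}
for all $\epsilon \in (0, 1]$.

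This is precisely the hypothesis \eqref{eq:BoundOnUniformEntropy} of \Cref{thm:AbstractUpperBound} with $k = 1$, $K \asymp I$, and $\epsilon_0 = 1$. Since $k = 1 < 2$, the first branch of \eqref{eq:AbstractUpperBound} applies and delivers
\begin{equation*}
  \EV{\bigl| \OTn - \OT_c(\mu, \nu) \bigr|} \lesssim n^{-1/2}
\end{equation*}
for each of the three empirical estimators listed in \eqref{eq:EmpiricalEstimator}, with an implicit constant depending only on $I$.

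There is no real obstacle here: discreteness of $\XC$ makes the dual class $\FCot$ finite-dimensional up to a bounded coordinate truncation, and the LCA machinery in \Cref{thm:AbstractUpperBound} already absorbs the (possibly complicated) space $\YC$ via the $c$-transform equality \eqref{eq:IdenticalMetricEntropy}. The only point worth a brief comment is that the two-sample bound holds uniformly in the joint law of $(\hat\mu_n, \hat\nu_n)$, which is automatic from \Cref{thm:AbstractUpperBound} and does not require independence of the samples.
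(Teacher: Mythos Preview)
Your proof is correct and follows essentially the same approach as the paper: bound the covering number on each singleton via \eqref{eq:MetricEntropyDiscrete}, combine via \Cref{lem:MetricEntropyUnionBound} to obtain $\log \NC(\epsilon, \FCot, \norm{\cdot}_\infty) \lesssim I\epsilon^{-1}$, and then invoke \Cref{thm:AbstractUpperBound} with $k=1<2$. The paper condenses exactly these steps into the paragraph preceding the theorem statement.
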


This result is in line with recent findings by \cite{delBarrio2021SemidiscreteCLT}, who derive a central limit theorem for the empirical semi-discrete OT cost. Their result allows for possibly unbounded costs, but it is limited to the one-sample estimator $\OTn = \OT_c(\hat\mu_n, \nu)$. According to Markov's inequality, \Cref{thm:SemiDiscreteOT} implies that the sequence of random variables $\sqrt{n}\big(\OTn - \OT_c(\mu, \nu)\big)$ is tight even for $\OTn = \OT_c(\mu, \hat\nu_n)$ or $\OT_c(\hat\mu_n, \hat\nu_n)$, which indicates that it might also be possible to derive limit distributions when the measure on the general space $\YC$ is estimated empirically.
Moreover, \Cref{thm:SemiDiscreteOT} asserts novel bounds for the Wasserstein distance when one measure is supported on finitely many points only while the support of the other measure is bounded.

\subsection{Optimal Transport under Lipschitz costs}
\label{ssec:lipschitz}

Semi-discrete OT can be regarded as a special OT setting where one probability measure has intrinsic dimension zero. We now broaden this perspective to higher dimensions and consider parameterized spaces and surfaces. The effective dimension is then governed by the (possibly low-dimensional) domain of the parameterization, and not by the (possibly high-dimensional) ambient space. 
In this section, we work with an additional Lipschitz requirement for the cost function $c\,\colon \XC\times \YC\rightarrow [0,1]$ and impose the following condition on the Polish space $\XC$. By rescaling, we may assume that the Lipschitz constant is equal to one.

\begin{assumption}\label{as:LipschitzCosts}
	Suppose $\XC= \bigcup_{i = 1}^{I} g_i(\UC_i)$ for $I \in \NN$ connected metric spaces $(\UC_{i}, d_i)$ and maps $g_i\,\colon\UC_i \to \XC$ so that $c(g_i(\cdot), y)$ is $1$-Lipschitz with respect to $d_i$ for all  $y\in\YC$.
\end{assumption}

This setting captures a broad notion of generalized surfaces in an ambient space. Since the mappings $g_i$ are not required to be injective, self-intersections are possible. Moreover, exploiting the (not necessarily disjoint) decomposition $\XC= \bigcup_{i = 1}^{I}\XC_i$ with $\XC_i \coloneqq g_i(\UC_i)$ for $i \in \{1, \dots, I\}$, it suffices by \Cref{lem:MetricEntropyUnionBound} to control the complexity of $\FCot|_{\XC_i}$ to bound the metric entropy of $\FCot$. For this purpose, we note that the Lipschitz continuity of $c(g_i(\cdot),y)$ implies that $f \circ g_i$ for any $c$-concave potential $f\in \FCot$ is Lipschitz as well. This relates the restricted function class ${\FCot|}_{\XC_i}$ to the class of bounded Lipschitz functions on $\UC_i$. For the latter, metric entropy bounds in terms of the covering number of $\UC_i$ are available
\citep[Section 9]{Kolmogorov1961}. For $\epsilon > 0$, the covering number of a metric space $(\UC, d)$ is defined by
\begin{equation*}
   \NC(\epsilon, \UC, d) \coloneqq \inf\left\{ n \in \NN \,\Big|\, \text{there exist } U_1, \dots, U_n\subseteq \UC \text{ with } \diam{U_k}\leq 2 \epsilon \text{ and } \UC = \bigcup_{k = 1}^{n} U_k\right\},
\end{equation*}
where $\diam{U}\coloneqq \sup_{u,v\in U}d(u,v)$ denotes the diameter of a subset $U \subseteq \UC$.

\begin{theorem}[Lipschitz LCA]\label{thm:LipschitzOT}
	Let $\XC$ and $\YC$ be Polish spaces and let $c\,\colon \XC\times \YC\rightarrow [0,1]$ be continuous. If \Cref{as:LipschitzCosts} holds and there exists $k > 0$ so that for all $i \in \{1, \ldots, I\}$
  \begin{equation*}
    \NC(\epsilon, \UC_i, d_i)\lesssim \epsilon^{-k}\qquad\text{for $\epsilon > 0$ sufficiently small},
  \end{equation*}
  then, for any $\mu\in\PC(\XC)$ and $\nu\in\PC(\YC)$, the empirical estimator $\OTn$ from \eqref{eq:EmpiricalEstimator} satisfies
  \begin{equation}
 \label{eq:LipschitzOTBound_Good}
    \EV{\big|\OTn  - \OT_c(\mu, \nu)\big|}
    \lesssim \begin{cases}
 	 n^{-1/2} & \text{ if } k < 2,\\
 	 n^{-1/2}\log(n) & \text{ if } k = 2,\\
 	 n^{-1/k} & \text{ if } k > 2.
 \end{cases}
  \end{equation}
\end{theorem}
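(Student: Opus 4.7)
The plan is to apply \Cref{thm:AbstractUpperBound} with the exponent $k$ given in the hypothesis, so the bulk of the work is to control the uniform metric entropy of $\FCot$ in terms of that of the parametrizing spaces $\UC_i$. First, I would invoke \Cref{lem:MetricEntropyUnionBound} with the decomposition $\XC = \bigcup_{i=1}^I \XC_i$, where $\XC_i \coloneqq g_i(\UC_i)$, to reduce the task to bounding $\log \NC(\epsilon, \FCot|_{\XC_i}, \norm{\cdot}_\infty)$ for each $i \in \{1, \ldots, I\}$ separately.

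The key step is to exploit $c$-concavity to pull the feasible potentials back to bounded Lipschitz functions on $\UC_i$. By \cite[Proposition~1.34]{santambrogio2015optimal}, every $f \in \FCot$ satisfies $f = (f^c)^c$, so
\begin{equation*}
  f\bigl(g_i(u)\bigr) = \inf_{y \in \YC}\, c\bigl(g_i(u), y\bigr) - f^c(y).
\end{equation*}
Under \Cref{as:LipschitzCosts}, each map $u \mapsto c(g_i(u), y)$ is $1$-Lipschitz with respect to $d_i$, so $f \circ g_i$ — being an infimum of $1$-Lipschitz functions — is itself $1$-Lipschitz on $(\UC_i, d_i)$, and it is absolutely bounded by one since $\norm{f}_\infty \leq 1$. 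Because $g_i$ is surjective onto $\XC_i$, the pullback $f|_{\XC_i} \mapsto f \circ g_i$ is an isometry under uniform norm from $\FCot|_{\XC_i}$ into the class $\Lip_1^1(\UC_i)$ of $1$-Lipschitz functions on $\UC_i$ bounded by one, and therefore
\begin{equation*}
  \NC\bigl(\epsilon, \FCot|_{\XC_i}, \norm{\cdot}_\infty\bigr) \leq \NC\bigl(\epsilon, \Lip_1^1(\UC_i), \norm{\cdot}_\infty\bigr).
\end{equation*}

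Next, I would invoke a Kolmogorov--Tikhomirov-type entropy bound for bounded Lipschitz functions on a totally bounded metric space, asserting $\log \NC(\epsilon, \Lip_1^1(\UC_i), \norm{\cdot}_\infty) \lesssim \NC(c\epsilon, \UC_i, d_i)$ for some absolute $c > 0$ and $\epsilon > 0$ sufficiently small. Combining with the hypothesis $\NC(\epsilon, \UC_i, d_i) \lesssim \epsilon^{-k}$ and \Cref{lem:MetricEntropyUnionBound} yields $\log \NC(\epsilon, \FCot, \norm{\cdot}_\infty) \lesssim I\, \epsilon^{-k}$ for small $\epsilon$, and \Cref{thm:AbstractUpperBound} then delivers exactly the rates in \eqref{eq:LipschitzOTBound_Good}.

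The main technical obstacle is securing the Kolmogorov--Tikhomirov bound without an extraneous $\log(1/\epsilon)$ factor. A naive discretization, quantizing values on an $\epsilon$-net to a $\epsilon$-grid in $[-1,1]$, would give $\log \NC \lesssim \epsilon^{-k}\log(1/\epsilon)$, which still suffices for $k \neq 2$ (by inflating the exponent slightly in \Cref{thm:AbstractUpperBound}) but would degrade the $n^{-1/2}\log(n)$ rate at the critical value $k = 2$. A sharper argument encoding the values on the net via increments along a spanning tree, whose total edge length is controlled by the connectedness of $\UC_i$ together with a packing argument, removes this log factor and recovers \eqref{eq:LipschitzOTBound_Good} verbatim.
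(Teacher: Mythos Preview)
Your proposal is correct and follows essentially the same route as the paper: reduce to each $\XC_i$ via \Cref{lem:MetricEntropyUnionBound}, pull back along $g_i$ (the paper packages your isometry observation as a separate composition bound, \Cref{lem:CompositionBound}), embed $\FCot\circ g_i$ into the bounded $1$-Lipschitz class on $(\UC_i,d_i)$, invoke the Kolmogorov--Tikhomirov entropy bound, and finish with \Cref{thm:AbstractUpperBound}. You also correctly anticipate the role of connectedness in suppressing the extra $\log(1/\epsilon)$ factor, which the paper handles in \Cref{lem:LipschitzMetricEntropy} and \Cref{rem:Connectedness}.
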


\begin{proof}
  \Cref{lem:LipschitzMetricEntropy} in \Cref{sec:OmittedProofs} shows that the uniform metric entropy in this setting is bounded by $\log \NC( \epsilon, \FC_{c},\norm{\cdot}_\infty)\lesssim \epsilon^{-k}$. Applying \Cref{thm:AbstractUpperBound} then yields bound \eqref{eq:LipschitzOTBound_Good}.
\end{proof}

\begin{remark}[Disconnected domains]\label{rem:Connectedness}
If the metric spaces $\UC_1, \dots, \UC_I$ in \Cref{as:LipschitzCosts} consist of finitely many connected components, \Cref{thm:LipschitzOT} remains valid at the price of a possibly larger constant. Moreover, if some $\UC_i$ has infinitely many components, \Cref{lem:LipschitzMetricEntropy} ensures
\begin{equation*}
  \log \NC( \epsilon, \FC_{c},\norm{\cdot}_\infty)\lesssim \epsilon^{-k}\log(\epsilon^{-1})\lesssim \epsilon^{-k-\delta}
\end{equation*}
for any $\delta > 0$ (where the implicit constant depends on $\delta$). 
 Hence, \Cref{thm:AbstractUpperBound} shows that bound \eqref{eq:LipschitzOTBound_Good} still holds when $k$ is replaced by $k+\delta$. 
\end{remark}

We emphasize once more that no additional assumptions on the complexity of the Polish space $\YC$ are necessary. To highlight applications and noteworthy consequences of \Cref{thm:LipschitzOT}, we consider a number of examples. 

\begin{example}[Metric spaces with Lipschitz costs]\label{ex:LipschitzMetricspaces}
	Let $\XC$ and $\YC$ be closed subsets of a Polish metric space $(\ZC,d)$ and consider costs $c\,\colon \ZC^2\rightarrow \RR$ that are continuous and absolutely bounded on $\XC\times \YC$. Furthermore, assume that $c(\cdot,y)$ is Lipschitz on $\XC$ uniformly in $y\in \YC$, which, for example, holds for $c(x,y) = d^p(x,y)$ and $p \geq 1$ if $\YC$ is bounded.
	Then, \Cref{thm:LipschitzOT} provides convergence rates whenever $\NC(\epsilon, \XC, d)\lesssim \epsilon^{-k}$. This condition holds if the \emph{upper Minkowski-Bouligand dimension} of $\XC$ \citep[Section 5.3]{mattila1995geometry}, defined by
\begin{equation*}
  \dimM{\XC}\coloneqq \limsup_{\epsilon \searrow 0} \frac{\log \NC(\epsilon, \XC, d)}{\log(\epsilon^{-1})},
\end{equation*}
is strictly dominated by $k$. Note that non-integral values of $\dimM{\XC}$ are possible and that $\XC$ does not necessarily have to be connected (\Cref{rem:Connectedness}).	
\end{example}

\begin{example}[Euclidean spaces with locally Lipschitz costs]\label{ex:LipschitzCostIntrinsicDimension}
	Consider a locally Lipschitz cost function $c\,\colon \RR^{d}\times \RR^d\rightarrow \RR$ for $d\in\NN$. This setting entails the choice $c(x,y) = \norm{x-y}^p$ for the Euclidean norm $\norm{\cdot}$, or the $l_p^p$-costs $c(x,y) = \sum_{i = 1}^{d}|x_i - y_i|^p$, both of which are popular options for the Wasserstein distance of order $p \ge 1$. In the following examples, we implicitly assume that $\XC$ and $\YC$ are Polish subsets of $\RR^d$.
	\begin{enumerate}[leftmargin=1.5em]
		\item[$(i)$] \emph{Bounded sets}: If $\XC, \YC\subset\RR^d$ are bounded sets, then $c(\cdot,y)$ is Lipschitz continuous on $\XC$ uniformly in $y \in \YC$ and $\NC(\epsilon, \XC, \norm{\cdot})\lesssim \epsilon^{-d}$. Since one can always enlarge $\XC$ and $\YC$ to connected compact sets, \Cref{thm:LipschitzOT} asserts that the convergence rate of the empirical OT cost is dominated by \eqref{eq:LipschitzOTBound_Good} with $k=d$. 

		\item[$(ii)$] \emph{Surfaces}: Improved bounds can be derived if $\XC = \bigcup_{i = 1}^{I} g_i(\UC_i)$ for Lipschitz maps $g_i\,\colon \UC_i \rightarrow \RR^d$ on bounded and connected sets $\UC_i \subseteq \RR^s$ with $d > s\in \NN$.
      In this setting, the partially evaluated cost $c\big(g_i(\cdot),y\big)$ is Lipschitz on $\XC$ uniformly in $y \in \YC$ for bounded sets $\YC$, and it holds that $\NC(\epsilon, \UC_i, \norm{\cdot})\lesssim \epsilon^{-s}$. Hence, bound \eqref{eq:LipschitzOTBound_Good} is valid for $k = s$. 

		\item[$(iii)$] \emph{Manifolds}: The examples outlined in $(ii)$ include compact $s$-dimensional immersed $\CC^1$ submanifolds of $\RR^d$, possibly with boundary \citep{lee2013smooth}. Due to compactness, a finite atlas of charts with connected and bounded co-domains $\UC_i$ can always be found. 
		\end{enumerate} 
\end{example}
	
We next show that the upper bounds in \Cref{thm:LipschitzOT} can be complemented by matching lower bounds in settings where the primal approach to the LCA principle (see \Cref{prop:LCAadditiveCosts}) is available. 
	
	\begin{example}[Lower bounds under Lipschitz costs]\label{ex:LowerBoundsLipschitzcosts}
  Let $\YC = \YC_1\times \YC_2$ be the product of two connected Polish spaces and let $\XC = \YC_1$. Consider the costs $c(x,y) = d_1(x,y_1) + c_2(y_2)$, where $d_1$ is a Polish metric on $\YC_1$ such that $0 < \diam{\YC_1} \le 1$ and $c_2\,\colon\YC_2\to[0,1]$ is continuous.
  Then $c(\cdot,y)$ is Lipschitz with respect to $d_1$ uniformly in $y \in \YC$, so \Cref{as:LipschitzCosts} is fulfilled. If $\NC(\epsilon, \XC, d_1) \asymp \epsilon^{-k}$ for some $k > 0$ as $\epsilon\searrow 0$ and if the empirical measures $\hat \mu_n$ and $\hat \nu_n$ are independent, one can show that
  \begin{equation}\label{eq:LowerBoundLipschitz}
  \sup_{\mu, \nu}\,\EV{|T_{c}(\hat\mu_n, \hat\nu_n) -  	T_{c}(\mu, \nu)|} \gtrsim 
	\begin{cases} 
	n^{-1/2} & \text{ if } k \leq 2,\\
	n^{-1/k} & \text{ if } k > 2,
	\end{cases}
\end{equation}
where the supremum is taken over $\mu\in\PC(\XC)$ and $\nu\in\PC(\YC)$.
	For $k\leq 2$, inequality~\eqref{eq:LowerBoundLipschitz} follows by selecting suitable discrete measures $\mu$ and $\nu$ (see \citealt[Section~5]{sommerfeld19FastProb}).
	For $k>2$, inequality \eqref{eq:LowerBoundLipschitz} follows by \eqref{eq:LowerBoundTool1} and \eqref{eq:LowerBoundTool2} in conjunction with the minimax lower bounds for the $1$-Wasserstein distance $W_1(\mu, \nu) = T_{d_1}(\mu, \nu)$ by \citet[Theorem~2]{singh2018minimax}
	\begin{equation*}
    \sup_{\mu, \nu}\,\EV{|T_{d_1}(\hat\mu_n, \pf_{\#}\nu) -  	T_{d_1}(\mu, \pf_{\#}\nu)|} \geq \sup_{\mu}\,\EV{T_{d_1}(\hat\mu_n, \mu)} \gtrsim n^{-1/k}.
  \end{equation*}
	Hence, the upper bounds from \eqref{eq:LipschitzOTBound_Good} are sharp for $k \neq 2$ and sharp up to logarithmic terms in case of $k = 2$. 
\end{example}
	
\subsection{Optimal Transport under Semi-Concave Costs}
\label{ssec:semi}

It is known that better convergence rates than those offered by \Cref{thm:LipschitzOT} can be obtained on Euclidean spaces for more regular cost functions \citep{Manole21}.
In this section, we consider improvements for semi-concave costs (before we turn to H\"older smooth costs in \Cref{ssec:holder}).
A function $f\,\colon U\rightarrow \RR$ on a bounded, convex domain $U\subset\RR^d$ for $d \in \NN$ is called $\Lambda$-\emph{semi-concave} with \emph{modulus} $\Lambda \geq 0$ if the map
\begin{equation*}
  u\mapsto f(u) - \Lambda \norm{u}^2
\end{equation*}
is concave, where $\norm{\cdot}$ denotes the Euclidean norm (cf.\ \citealt{gangbo1996geometry}). The uniform metric entropy of the class of bounded, Lipschitz, and semi-concave functions on $U$ is of order $\epsilon^{-d/2}$ \citep{bronshtein1976varepsilon, guntuboyina2012covering}, compared to $\epsilon^{-d}$ without semi-concavity. Since boundedness, Lipschitz continuity, and semi-concavity are all inherited to $\FCot$ by the cost function, we impose the following conditions. For convenience, we assume the Lipschitz constant and the modulus of semi-concavity to be equal to one, since other constants can be accommodated by scaling the cost function. 

\begin{assumption}
	\label{as:SemiconcaveCosts}
  Suppose $\XC= \bigcup_{i = 1}^{I}g_i(\UC_i)$ for $I\in\NN$ bounded, convex subsets $\UC_i\subseteq \RR^d$ and maps $g_i\,\colon \UC_i \rightarrow \XC$ so that $c(g_i(\cdot),y)$ is $1$-Lipschitz and $1$-semi-concave for all $y\in\YC$.
\end{assumption}

Similar to \Cref{as:LipschitzCosts} in the previous section, \Cref{as:SemiconcaveCosts} enables the application of \Cref{lem:MetricEntropyUnionBound} to the union $\XC= \bigcup_{i = 1}^{I}\XC_i$ with $\XC_i = g_i(\UC_i)$ for all $i \in \{1, \dots, I\}$. Combined with the uniform metric entropy bounds by \cite{bronshtein1976varepsilon} and \cite{guntuboyina2012covering}, this leads to the following result. 

\begin{theorem}[Semi-concave LCA]\label{thm:SemiconcaveOT}
  Let $\XC$ and $\YC$ be Polish spaces and $c \,\colon \XC\times \YC \rightarrow [0,1]$ be continuous. If \Cref{as:SemiconcaveCosts} holds, then, for any $\mu \in \PC(\XC)$ and $\nu \in \PC(\YC)$, the empirical estimator $\OTn$ from \eqref{eq:EmpiricalEstimator} satisfies  
   \begin{equation}\label{eq:SemiconcaveBound}
    \EV{\big|\OTn  - \OT_c(\mu, \nu)\big|} \lesssim \begin{cases}
 	 n^{-1/2} & \text{ if } d  \leq 3,\\
 	 n^{-1/2}\log(n) & \text{ if } d = 4,\\
 	 n^{-2/d} & \text{ if } d \geq 5.
 \end{cases}
  \end{equation}
\end{theorem}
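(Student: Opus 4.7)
The plan is to mimic the strategy used for Theorem~\ref{thm:LipschitzOT}: reduce the statement to a uniform metric entropy bound on $\FCot$ and invoke Theorem~\ref{thm:AbstractUpperBound}. The three regimes in \eqref{eq:SemiconcaveBound} correspond exactly to the regimes in \eqref{eq:AbstractUpperBound} with exponent $k = d/2$, so the task reduces to proving
\begin{equation*}
  \log \NC(\epsilon, \FCot, \norm{\cdot}_\infty) \lesssim \epsilon^{-d/2}
  \qquad\text{for } \epsilon > 0 \text{ sufficiently small.}
\end{equation*}

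First I would apply \Cref{lem:MetricEntropyUnionBound} to the decomposition $\XC = \bigcup_{i=1}^{I} \XC_i$ with $\XC_i = g_i(\UC_i)$, which reduces matters to controlling $\NC(\epsilon, \FCot|_{\XC_i}, \norm{\cdot}_\infty)$ for each $i \in \{1, \dots, I\}$ separately. Since $g_i$ maps onto $\XC_i$, pulling back along $g_i$ is an isometric embedding in uniform norm, i.e.
\begin{equation*}
  \NC(\epsilon, \FCot|_{\XC_i}, \norm{\cdot}_\infty)
  \;\le\;
  \NC\bigl(\epsilon,\, \{\,f\circ g_i : f\in\FCot\,\},\, \norm{\cdot}_\infty\bigr).
\end{equation*}
The key step is then to verify that every pulled-back potential $f\circ g_i$ lies in the class of $1$-bounded, $1$-Lipschitz, $1$-semi-concave functions on the bounded convex set $\UC_i\subseteq\RR^d$.

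For this, recall that $f\in\FCot$ satisfies $f = f^{cc}$ with $\norm{f}_\infty,\norm{f^c}_\infty \le 1$, so
\begin{equation*}
  f(g_i(u)) = \inf_{y\in\YC}\bigl(c(g_i(u),y) - f^c(y)\bigr),
  \qquad u\in\UC_i.
\end{equation*}
By \Cref{as:SemiconcaveCosts}, each map $u\mapsto c(g_i(u),y) - f^c(y)$ is $1$-Lipschitz and $1$-semi-concave on $\UC_i$, uniformly in $y\in\YC$. Both properties are preserved under pointwise infima: Lipschitz continuity because moduli of continuity are stable under $\inf$, and semi-concavity because $u\mapsto c(g_i(u),y) - f^c(y) - \norm{u}^2$ is concave for every $y$, so its infimum over $y$ is concave as well. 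Combined with $|f\circ g_i|\le 1$, this places $\{f\circ g_i : f\in\FCot\}$ inside the class of bounded Lipschitz semi-concave functions on $\UC_i$ (with uniform constants), for which the classical entropy estimate of \cite{bronshtein1976varepsilon} refined by \cite{guntuboyina2012covering} yields
\begin{equation*}
  \log\NC\bigl(\epsilon,\, \{\,f\circ g_i : f\in\FCot\,\},\, \norm{\cdot}_\infty\bigr)
  \lesssim \epsilon^{-d/2}.
\end{equation*}

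Summing over $i=1,\dots,I$ via \Cref{lem:MetricEntropyUnionBound} then gives the desired $\epsilon^{-d/2}$ bound on the uniform metric entropy of $\FCot$, and applying \Cref{thm:AbstractUpperBound} with $k=d/2$ produces the three cases in \eqref{eq:SemiconcaveBound}. The main technical point I expect to be the delicate one is the verification that semi-concavity (with a uniform modulus) is transferred from the partially evaluated costs $c(g_i(\cdot),y)$ to the pulled-back potentials $f\circ g_i$; once the infimum argument is in place, the remainder is an assembly of the union bound, the classical Bronshtein-type entropy estimate, and the abstract LCA theorem already proved.
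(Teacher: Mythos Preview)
Your proposal is correct and follows essentially the same route as the paper: reduce via \Cref{lem:MetricEntropyUnionBound} and pullback along $g_i$, show that $f\circ g_i$ inherits boundedness, Lipschitz continuity, and semi-concavity from $c(g_i(\cdot),y)$ via the infimum representation $f = f^{cc}$, and then apply the Bronshtein/Guntuboyina--Sen entropy bound before invoking \Cref{thm:AbstractUpperBound} with $k=d/2$. The only technical step the paper adds that you glossed over is that the cited entropy bounds are stated for concave functions on \emph{cubes}, so the paper first extends the concave function $u\mapsto f\circ g_i(u) - \|u\|^2$ from $\UC_i$ to a cube $\DC\supset\UC_i$ via a Lipschitz-preserving concave extension \citep{dragomirescu1992smallest} before applying Bronshtein's estimate.
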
 

\begin{proof}
  \Cref{lem:SemiconcaveMetricEntropy} in \Cref{sec:OmittedProofs} shows that the uniform metric entropy in this setting is bounded by
  $\log\NC(\epsilon, \FCot, \norm{\cdot}_\infty)\lesssim \epsilon^{-d/2}$. This implies bound \eqref{eq:SemiconcaveBound} via \Cref{thm:AbstractUpperBound}.
\end{proof}

Compared to \Cref{thm:LipschitzOT}, which would guarantee a convergence rate of $n^{-1/d}$ for $d \geq 5$ under \Cref{as:SemiconcaveCosts}, \Cref{thm:SemiconcaveOT} provides the considerably faster rate $n^{-2/d}$. To explore applications, we revisit the settings discussed in \Cref{ex:LipschitzCostIntrinsicDimension} and show how they fare under the stronger \Cref{as:SemiconcaveCosts}.
For this purpose, note that semi-concavity of a $\CC^2$ function $f$ on a convex domain is implied by the boundedness of the Eigenvalues of its Hessian. Indeed, if the Eigenvalues are bounded from above by $2\Lambda\geq 0$, then $u \mapsto f(u) - \Lambda \norm{u}^2$ has a negative semi-definite Hessian and is therefore concave (see \citealt[Section~6.4, Proposition~5]{luenberger2003linear}).

\begin{example}[Euclidean spaces with $\CC^2$ costs]\label{ex:SemiconcaveExamples}
  Extending \Cref{ex:LipschitzCostIntrinsicDimension}, consider a twice continuously differentiable cost function $c\,\colon \RR^d\times \RR^d\rightarrow \RR$ for $d\in\NN$.
  This setting includes $c(x,y) = \norm{x-y}^p$ as well as $c(x,y) = \sum_{i = 1}^{d}|x_i - y_i|^p$ for $p \geq 2$. We implicitly assume that $\XC$ and $\YC$ are Polish subsets of $\RR^d$ in the following.

	\begin{enumerate}[leftmargin=1.5em]
		\item[$(i)$] \emph{Bounded sets}: Let $\XC, \YC \subset\RR^d$ be bounded sets. Since $\XC$ and $\YC$ can always be enlarged to convex compact sets, the functions $c(\cdot, y)$ are Lipschitz continuous and semi-concave on $\XC$ uniformly in $y\in\YC$. Hence, \Cref{thm:SemiconcaveOT} provides the upper bounds \eqref{eq:SemiconcaveBound}.

		\item[$(ii)$] \emph{Surfaces}: Improved bounds can be obtained if $\XC= \bigcup_{i = 1}^{I}g_i(\UC_i)$ for $\CC^2$ maps $g_i\,\colon \UC_i \rightarrow \RR^d$ with bounded second derivatives on open, bounded, and convex sets $\UC_i\subset \RR^s$ for $s < d$. In this setting, the partially evaluated cost $c(g_i(\cdot), y)$ is Lipschitz and semi-concave on $\UC_i$ uniformly in $y\in\YC$ for bounded $\YC$. Hence, bound \eqref{eq:SemiconcaveBound} holds with $d$ replaced by $s$.

		\item[$(iii)$] \emph{Manifolds}: The setting described in $(ii)$ includes compact $s$-dimensional immersed $\CC^2$ submanifolds of $\RR^d$ \citep{lee2013smooth}. Compactness ensures the existence of an atlas with finitely many charts such that all involved maps have bounded second derivative and all co-domains of charts are convex and bounded. 
	\end{enumerate}
\end{example}

We continue with a setting in which lower bounds for the empirical OT cost that match the upper bounds in \Cref{thm:SemiconcaveOT} can be derived via \Cref{prop:LCAadditiveCosts}. 

\begin{example}[Lower bounds under semi-concave costs]\label{ex:LowerBoundsSemiconcavecosts}
  Let $1 \le d_1 \le d_2$ and consider the unit cubes $\XC = [0, 1]^{d_1}$ and $\YC = [0, 1]^{d_2}$. If $\XC$ is embedded into $\YC$ along the first $d_1$ coordinates, the squared Euclidean cost function amounts to
\begin{equation*}
  c(x, y) = \|x - y_1\|^2 + \|y_2\|^2 \eqqcolon c_1(x, y_1) + c_2(y_2),
\end{equation*}
where $y = (y_1, y_2) \in [0, 1]^{d_1 + (d_2-d_1)}$. Up to scaling of the cost function, this setting satisfies \Cref{as:SemiconcaveCosts}. For independent empirical measures $\hat\mu_n$ and $\hat\nu_n$, one can show that
	\begin{equation}\label{eq:LowerBoundSemiconcave}
	\sup_{\mu, \nu}\,\EV{\left|\OT_{c}(\hat\mu_n, \hat \nu_n) -\OT_{c}(\mu, \nu) \right|}\gtrsim \begin{cases}
    n^{-1/2} & \text{ if } d_1 \leq 4,\\
    n^{-2/d_1} & \text{ if } d_1 \geq 5,
	 \end{cases}
	\end{equation}
  where $\mu\in\PC(\XC)$ and $\nu\in\PC(\YC)$ in the supremum.
	For $d_1 \leq 4$, this lower bound follows by selecting suitable discrete measures (see \citealt[Section 5]{sommerfeld19FastProb}), and for $d_1 \geq 5$, it follows from inequality \eqref{eq:LowerBoundTool1} in conjunction with Proposition~21 in \citet{Manole21} (which is also applicable for more general strictly convex cost functions). 
	Overall, the upper bounds in \eqref{eq:SemiconcaveBound} match the lower bounds \eqref{eq:LowerBoundSemiconcave} in case $d_1 \neq 4$ and are sharp up to logarithmic factors for $d_1 = 4$.
\end{example}

We want to highlight that the fast rates of \Cref{thm:SemiconcaveOT} (compared to \Cref{thm:LipschitzOT}) can often be expected in the settings of \Cref{ex:SemiconcaveExamples} even if the cost function is not $\CC^2$ on all of $\RR^{2d}$. For example, if the set 
\begin{equation*}
  \DC \coloneqq\left\{(x,y)\,\big|\, c \text{ is not $\CC^2$ at $(x,y)$}\right\} \subset \RR^{2d}
\end{equation*}
is strictly separated from $\Sigma \coloneqq\supp(\mu) \times \supp(\nu) \subset\RR^{2d}$, one can extend the restriction $c|_{\Sigma}$ to a $\CC^2$ cost function on all of $\RR^{2d}$ (by the extension theorem of \citealt{whitney1934analytic}) without altering $\OTn$ or $\OT_c(\mu, \nu)$. If $c(x,y) = \norm{x-y}^p$ for any $0 < p < 2$, we find $\DC = \{(x, x)\,|\,x\in\RR^d\}$, implying the fast convergence rates in \eqref{eq:SemiconcaveBound} whenever the supports of $\mu$ and $\nu$ are strictly separated. Similar observations were pointed out by \citet[Corollary~3(ii)]{Manole21} under additional convexity assumptions. In contrast, considering the $l_p^p$-cost function $c(x, y) = \sum_{i = 1}^{d}|x_i - y_i|^p$ for $0 < p < 2$, the set 
\begin{equation}\label{eq:DifferentiabilityFailureL1}
  \DC = \left\{(x,y) \,\big|\, x_i = y_i \text{ for some } i\in \{1,\dots, d\} \right\} \subset \RR^{2d}
\end{equation}
is notably larger. Therefore, the $p$-Wasserstein distance based on the Euclidean norm may exhibit faster convergence rates than its $l_p$ counterpart, e.g., if $\mu$ is a translation of $\nu$ along a coordinate axis.

We conjecture that the faster rates implied by \Cref{thm:SemiconcaveOT} will occur under even more general circumstances. For example, in some settings it might suffice that $\DC$ is negligible under the actual optimal transport, meaning that $\pi(\DC) = 0$ for an OT plan between $\mu$ and $\nu$. A proof of this claim, however, would likely require quantitative statements on the regularity of the cost function along the support of empirical OT plans and lies beyond the scope of this work.
Still, we pick up on this hypothesis and observe some numerical evidence in \Cref{sec:Simulations}.

\subsection{Optimal Transport under H\"older Costs}
\label{ssec:holder}

In \Cref{ssec:lipschitz} and \ref{ssec:semi}, we have shown that the rate of convergence of the empirical OT cost in $\RR^d$ is bounded by $n^{-1/d}$ for Lipschitz and $n^{-2/d}$ for $\CC^2$ costs (if $d \ge 5$).
The recent work of \cite{Manole21} demonstrated that these results can be generalized to $\alpha$-H\"older smooth costs for $0 < \alpha \le 2$, deriving the rates $n^{-\alpha/d}$.
In this section, we employ similar arguments to bound the uniform metric entropy of the class $\FCot$ by $\epsilon^{-d/\alpha}$ (for any $d\in\NN$ and $0 < \alpha \le 2$) in settings resembling the ones of \Cref{as:LipschitzCosts} and \ref{as:SemiconcaveCosts}.

We say that a function $f\colon U\to \RR$ on a convex domain $U\subset\RR^d$ is $(\alpha,\Lambda)$-H\"older smooth for $0 < \alpha \le 1$ and $\Lambda > 0$ if $\|f\|_\infty < \Lambda$ and
\begin{equation*}
  |f(x) - f(y)| \le \Lambda\cdot\|x - y\|^\alpha.
\end{equation*}
Moreover, we say that $f$ is $(\alpha, \Lambda)$-H\"older smooth for $1 < \alpha \le 2$ if $\|f\|_\infty < \Lambda$ and $f$ is differentiable with $(\alpha - 1, \Lambda)$-H\"older smooth partial derivatives. If the convex domain $U$ in this definition is not open, we assume the existence of a H\"older smooth function on an open subset of $\RR^d$ containing $U$ that coincides with $f$ on $U$.

\begin{assumption}\label{as:HolderCosts}
Let $\alpha\in(0, 2]$ and suppose that $\XC= \bigcup_{i = 1}^{I} g_i(\UC_i)$ for $I\in\NN$ compact, convex subsets $\UC_i \subset \RR^d$ and maps $g_i\,\colon\UC_i \to \XC$ so that $c(g_i(\cdot), y)$ is $(\alpha, 1)$-H\"older for all $y\in\YC$.
\end{assumption}

\begin{theorem}[H\"older LCA]\label{thm:HolderOT}
  Let $\XC$ and $\YC$ be Polish spaces and $c \,\colon \XC\times \YC \rightarrow [0,1]$ be continuous. If \Cref{as:HolderCosts} holds, then, for any $\mu \in \PC(\XC)$ and $\nu \in \PC(\YC)$, the empirical estimator $\OTn$ from \eqref{eq:EmpiricalEstimator} satisfies  
 \begin{equation}\label{eq:HolderBound}
   \EV{\big|\OTn  - \OT_c(\mu, \nu)\big|} \lesssim \begin{cases}
 	 n^{-1/2} & \text{ if } d < 2\alpha,\\
 	 n^{-1/2}\log(n) & \text{ if } d = 2\alpha,\\
 	 n^{-\alpha/d} & \text{ if } d > 2\alpha.
   \end{cases}
  \end{equation}
\end{theorem}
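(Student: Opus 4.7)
The plan is to mirror the structure of \Cref{thm:LipschitzOT} and \Cref{thm:SemiconcaveOT}: establish a bound of the form $\log \NC(\epsilon, \FCot, \norm{\cdot}_\infty) \lesssim \epsilon^{-d/\alpha}$ on the uniform metric entropy of $\FCot$, then invoke \Cref{thm:AbstractUpperBound} with $k = d/\alpha$ to obtain \eqref{eq:HolderBound}. Indeed, $k = d/\alpha$ satisfies $k < 2 \Leftrightarrow d < 2\alpha$, $k = 2 \Leftrightarrow d = 2\alpha$, and $k > 2 \Leftrightarrow d > 2\alpha$, yielding $n^{-1/k} = n^{-\alpha/d}$ in the last regime, which matches the stated bounds.

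First I would invoke \Cref{lem:MetricEntropyUnionBound} with the decomposition $\XC = \bigcup_{i=1}^{I} g_i(\UC_i)$ from \Cref{as:HolderCosts}. Since composing with the surjection $g_i$ does not increase the uniform norm, this reduces the task to bounding, for each $i$, the uniform metric entropy of the pullback class $\FCot^{(i)} \coloneqq \{f \circ g_i \mid f \in \FCot\}$ on the compact convex set $\UC_i \subset \RR^d$. Up to a constant depending on $\alpha$, I would then argue that $\FCot^{(i)}$ is contained in a class of uniformly bounded $(\alpha, C)$-H\"older smooth functions on $\UC_i$. For $0 < \alpha \le 1$ this is direct: any $f \in \FCot$ is of the form $f(x) = \inf_{y} c(x,y) - h(y)$ for some bounded $h$, and the infimum of $(\alpha, 1)$-H\"older functions (uniformly in $y$) is itself $(\alpha, 1)$-H\"older. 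For $1 < \alpha \le 2$, the naive infimum argument fails, and one needs to exploit the $c$-concave structure: since $c(g_i(\cdot), y)$ has $(\alpha-1)$-H\"older gradients uniformly in $y$, a H\"older analogue of the semi-concavity property carried out in \Cref{lem:SemiconcaveMetricEntropy} yields that $f \circ g_i$ can be written as a difference between a $(\alpha,C)$-H\"older smooth function and a concave function — which is enough to apply the classical metric entropy estimates for such classes.

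The second step is to apply the classical Kolmogorov--Tikhomirov bound for the uniform metric entropy of the class of $(\alpha, C)$-H\"older smooth functions on a compact convex subset of $\RR^d$, which scales as $\epsilon^{-d/\alpha}$. Combining this with the union bound over the finitely many charts $\UC_i$ yields
\begin{equation*}
  \log \NC(\epsilon, \FCot, \norm{\cdot}_\infty) \;\lesssim\; I \cdot \epsilon^{-d/\alpha} \;\lesssim\; \epsilon^{-d/\alpha}
\end{equation*}
for $\epsilon$ sufficiently small. \Cref{thm:AbstractUpperBound} with $k = d/\alpha$ then immediately yields the three cases in \eqref{eq:HolderBound}.

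The main obstacle is the regime $1 < \alpha \le 2$, where $c$-concave potentials need not be $(\alpha, 1)$-H\"older in the strict sense, since an infimum of $C^1$ functions is generically only Lipschitz. The argument must therefore bound the metric entropy of the class of $c$-concave potentials by that of $(\alpha, C)$-H\"older functions indirectly, analogously to the passage from Lipschitz to semi-concave entropies in \Cref{lem:SemiconcaveMetricEntropy}. I expect this step to be the technical heart of the proof and to require a careful adaptation of covering estimates along the lines recently developed by \citet{Manole21}, encapsulated in an auxiliary lemma paralleling \Cref{lem:LipschitzMetricEntropy} and \Cref{lem:SemiconcaveMetricEntropy} in \Cref{sec:OmittedProofs}.
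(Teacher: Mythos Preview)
Your overall strategy---bound $\log \NC(\epsilon, \FCot, \norm{\cdot}_\infty) \lesssim \epsilon^{-d/\alpha}$ via \Cref{lem:MetricEntropyUnionBound} and chart-wise pullbacks, then apply \Cref{thm:AbstractUpperBound} with $k=d/\alpha$---matches the paper, and your treatment of $0<\alpha\le 1$ is essentially correct. The gap is in the regime $1<\alpha\le 2$. Your proposed decomposition of $f\circ g_i$ as ``$(\alpha,C)$-H\"older smooth minus concave'' does not deliver the entropy bound. In the semi-concave case (\Cref{lem:SemiconcaveMetricEntropy}) the crucial point is that one subtracts a \emph{fixed} function $\norm{u}^2$, so the shifted class has the entropy of bounded Lipschitz concave functions. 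For $1<\alpha<2$ there is no single $(\alpha,C)$-H\"older function $h$ such that $c(g_i(\cdot),y)-h$ is concave for all $y$ simultaneously; if $h$ is allowed to depend on $f$, the entropy of the H\"older part already contributes $\epsilon^{-d/\alpha}$ on top of the concave entropy, and you gain nothing. Invoking Kolmogorov--Tikhomirov for $(\alpha,C)$-H\"older classes directly is also blocked, since---as you note yourself---$c$-concave potentials are generically only Lipschitz when $\alpha>1$.

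The paper's actual argument for $1<\alpha\le 2$ (\Cref{lem:HoelderMetricEntropy}, via \Cref{lem:ExistenceSmoothFunctions}) is a scale-dependent reduction to the semi-concave case rather than a static decomposition. One mollifies $c(g_i(\cdot),y)$ at bandwidth $\sigma$ to obtain costs $c_\sigma(\cdot,y)$ with $\norm{c_\sigma(\cdot,y)}_{\CC^2}\lesssim \sigma^{\alpha-2}$ and approximation error $\norm{c(g_i(\cdot),y)-c_\sigma(\cdot,y)}_\infty\lesssim\sigma^\alpha$, uniformly in $y$. The smoothed potentials $f_\sigma:=\inf_y c_\sigma(\cdot,y)-f^c(y)$ then approximate $f\circ g_i$ within $O(\sigma^\alpha)$ and are semi-concave with modulus $\lesssim\sigma^{\alpha-2}$. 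Applying the concave entropy bound from \Cref{lem:SemiconcaveMetricEntropy} to the rescaled class gives $\log\NC(\epsilon/2,\{f_\sigma\},\norm{\cdot}_\infty)\lesssim(\epsilon\,\sigma^{2-\alpha})^{-d/2}$; choosing $\sigma\asymp\epsilon^{1/\alpha}$ makes the approximation error $\le\epsilon/2$ and yields exactly $\epsilon^{-d/\alpha}$. This smoothing-and-balancing step is the missing idea in your proposal.
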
 
\begin{proof}
  \Cref{lem:HoelderMetricEntropy} in \Cref{sec:OmittedProofs} shows that the uniform metric entropy in this setting is bounded by $\log\NC(\epsilon, \FCot, \|\cdot\|_\infty) \lesssim \epsilon^{-d/\alpha}$. An application of \Cref{thm:AbstractUpperBound} yields the claim.
\end{proof}

\Cref{thm:HolderOT} can be used to derive upper bounds for H\"older smooth cost functions on Euclidean spaces analogous to \Cref{ex:SemiconcaveExamples}. Moreover, it is again possible to derive lower bounds in certain situations. For instance, in the setting of \Cref{ex:LowerBoundsSemiconcavecosts}, we can consider $\alpha$-H\"older costs of the form $c(x,y) = \sum_{i = 1}^{d_1}|x_i - y_{1i}|^\alpha + \sum_{i=d_1+1}^{d_2} |y_{2i}|^\alpha$ for $\alpha \in (0,2]$. Then, \Cref{as:HolderCosts} is fulfilled (for $d = d_1$) and one can show that 
\begin{equation*}
	\sup_{\mu, \nu}\,\EV{|T_{c}(\hat\mu_n, \hat\nu_n) -  	T_{c}(\mu, \nu)|} \gtrsim \begin{cases}n^{-1/2} &\text{ if } d_1 \leq 2 \alpha,\\ n^{-\alpha/d_1} &\text{ if } d_1 > 2 \alpha,	
 \end{cases}
\end{equation*}
where the supremum is taken over $\mu\in\PC(\XC)$ and $\nu\in\PC(\YC)$.
Herein, the lower bound for $d_1 \leq 2 \alpha$ follows by selecting discretely supported measures, whereas the regime $d_1 > 2 \alpha$ is covered by Proposition 21 in \cite{Manole21}. In particular, in case of $d_1\neq 2 \alpha$, the upper bound from \Cref{thm:HolderOT} matches the lower bound, whereas for $d_1= 2\alpha$ it is sharp only up to a logarithmic factor. 

\section{Simulations}
\label{sec:Simulations}

In the previous sections, we investigated the convergence rate for the empirical OT cost in various settings, stressing than an intrinsic adaptation to the less complex measure governs asymptotic statistical properties. 
We now turn to the question if these asymptotic properties can already be observed in the finite sample regime accessible to numerical analysis.
For this purpose, we fix probability measures $\mu$ and $\nu$ and approximate the mean absolute deviation
\begin{equation*}
  \Delta_n = \EV{\big|\OTn - \OT_c(\mu, \nu)\big|}
\end{equation*}
for various values of $n$ via Monte-Carlo simulations with $2000$ independent repetitions.\footnote{We employed the network-simplex based \texttt{C++} solver by \citet{Bonneel2011} for the computation of the empirical OT cost. The full source code used to produce the data in this section can be found under \url{https://gitlab.gwdg.de/staudt1/lca}.}
Since the value of $\OT_c(\mu, \nu)$ has to be known with high accuracy for conclusive results, we are restricted to relatively simple settings where analytical approaches are feasible and the optimal transport cost or map can be computed explicitly. The spaces $\XC$ and $\YC$ are considered to be subsets of $\RR^d$, with either $l_1$ and $l_2^2$ cost functions of the form
\begin{equation*}
  c_1(x, y) = \sum_{i=1}^d |x_i - y_i|
  \qquad\text{or}\qquad
  c_2(x, y) = \sum_{i=1}^d |x_i - y_i|^2.
\end{equation*}
In total, we look at the following choices for $\mu$ and $\nu$. The intrinsic dimension of the former is denoted by $d_1$, and the one of the latter by $d_2$, where $d_1 \le d_2 \le d$. The $r$-dimensional unit-sphere is denoted by $\sphere^r\subset\RR^{r+1}$.
\begin{enumerate}[leftmargin=1.5em]
  \item[$(i)$] \emph{Cube}:
    We choose $\mu = \uniform(\XC)$ for $\XC = [0,1]^{d_1}\times\{0\}^{d_2-d_1}$ and
    $\nu=\uniform(\YC)$ for $\YC = [0, 1]^{d_2}$. As the one-sample
    estimates $\OT_c(\hat\mu_n, \nu)$ and $\OT_c(\mu, \hat\nu_n)$ are
    computationally infeasible, we employ the two-sample estimates $\OTn
    = \OT_c(\hat\mu_n, \hat\nu_n)$ for up to $n = 2^{11}= 2048$. The value of
    $\OT_c(\mu, \nu)$ is calculated analytically.
    
  \item[$(ii)$] \emph{Sphere}:
    We choose $\mu = \uniform(\XC)$ for $\XC = \sphere^{d_1}\times\{0\}^{d_2-d_1}$ 
    and $\nu=\uniform(\YC)$ for $\YC = \sphere^{d_2}$. The two-sample estimate
    $\OTn = \OT_c(\hat\mu_n, \hat\nu_n)$ is used for up to $n = 2^{11}=2048$ and
    $\OT_c(\mu, \nu)$ is approximated numerically (the optimal transport map
    between $\nu$ to $\mu$ can be established due to the symmetry of the
    setting).

  \item[$(iii)$] \emph{Semi-discrete}:
    We choose $\nu=\uniform(\YC)$ for $\YC = [0, 1]^d$ and set
    $\mu=\pf_\#\nu$, where $\pf(y) = \textup{argmin}_{x\in\XC}\, c(x, y)$
    denotes the $c$-projection onto the finite set $\XC = \{x_i\}_{i=1}^I \subset [0,
    1]^d$ with $I \in \NN$.
    Consequently, $\mu(\{x_i\})$ equals the fraction of the volume of $[0,
    1]^d$ that lies closest to $x_i$.
    The positions $x_i$ have been fixed once for each pair $(I, d)$ by drawing
    them uniformly in $[0, 1]^d$. 
    The one-sample estimator $\OTn = \OT_c(\mu, \hat\nu_n)$ is used for up to $n
    = 2^{15} =32768$ and $\OT_c(\mu, \nu)$ is approximated numerically (based on the
    observation that $\pf$ is the optimal transport map between $\nu$ and
    $\mu$).
\end{enumerate}

\begin{figure}[t!]
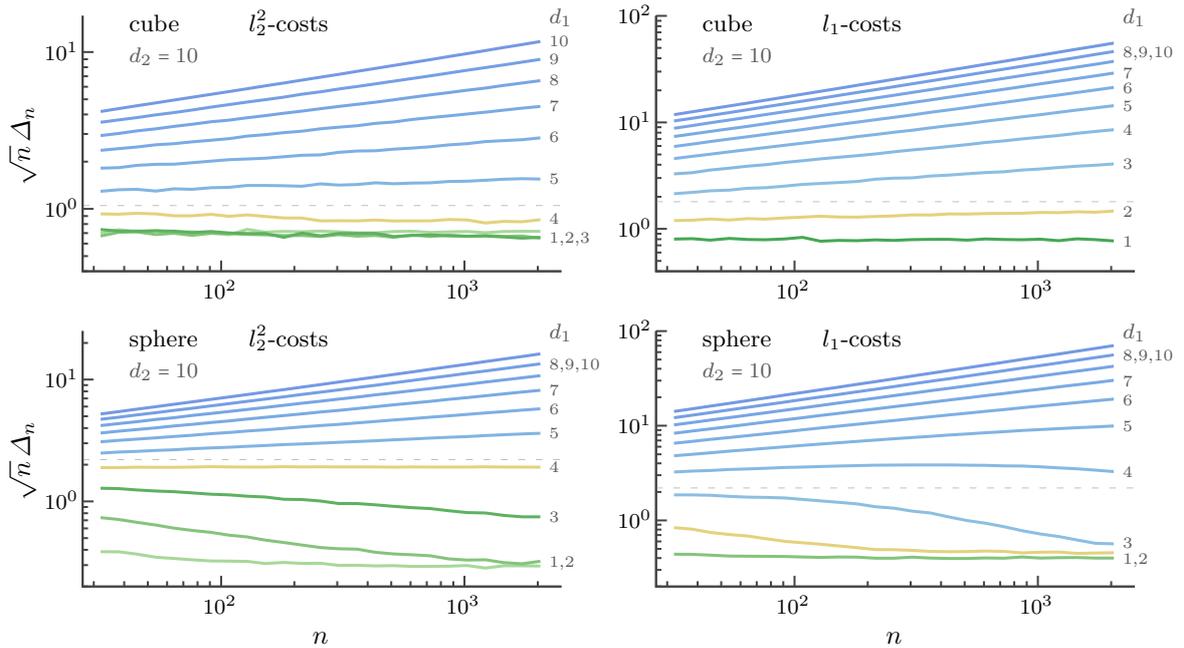

  \small
  \input{figures/tex_smooth-nonsmooth-cubes-l2.tex}
  \hspace{-0.40cm}\input{figures/tex_smooth-nonsmooth-cubes-l1.tex}

  \vspace{-0.30cm}
  \input{figures/tex_smooth-nonsmooth-spheres-l2.tex}
  \hspace{-0.40cm}\input{figures/tex_smooth-nonsmooth-spheres-l1.tex}

  \caption{Simulations of the mean absolute deviation $\Delta_n$ in the \emph{cube}
    and \emph{sphere} settings. The different curves correspond to $1 \le d_1 \le 10$.
    Green lines mark the dimensions $d_1$ for which
    the upper bounds in \Cref{thm:LipschitzOT} and \Cref{thm:SemiconcaveOT} suggest
    $\sqrt{n}\Delta_n$ to be bounded, while yellow and blue lines enjoy no such
    guarantee.
  }
  \label{fig:smooth-nonsmooth}
\end{figure}

A first set of simulation results in the \emph{cube} and \emph{sphere} settings with smooth~$l_2^2$ and non-smooth~$l_1$ costs for fixed $d_2 = 10$ can be seen in \Cref{fig:smooth-nonsmooth}. As anticipated by the LCA principle, the smaller dimension $d_1$ appears to dictate the convergence rate of $\Delta_n$ towards zero as $n$ becomes large. For smooth costs, $\Delta_n$ seems to converge with the rate $1/\sqrt{n}$ for $d_1 \le 3$ (and even in the critical case $d_1 = 4$, which is in line with results by \citealt{ledoux2019OptimalMatchingI}), while the convergence for $d_1 \ge 5$ is perceivably slower in both settings. This is in good agreement with the upper bounds \eqref{eq:SemiconcaveBound} established by \Cref{thm:SemiconcaveOT} for $\CC^2$ cost functions.
For non-smooth $l_1$ costs, in contrast, the \emph{cube} setting again exhibits the behavior to be expected if the bounds \eqref{eq:LipschitzOTBound_Good} of \Cref{thm:LipschitzOT} were sharp (i.e, only $d_1 = 1$ leads to a clear $n^{-1/2}$ convergence), but the results for the \emph{sphere} setting somewhat resemble the ones for smooth costs. This salient difference might be explained along the lines discussed in the context of equation \eqref{eq:DifferentiabilityFailureL1}. In fact, if $\pi$ denotes an optimal transport plan for $\OT_{c_1}(\mu, \nu)$, then it is straightforward to see that the cost function $c_1$  is \emph{not} differentiable $\pi$-almost surely in the \emph{cube} setting (since all optimal movement of mass leaves the first $d_1$ coordinates unchanged), while it \emph{is} differentiable $\pi$-almost surely in the \emph{sphere} setting (almost all mass is moved such that each coordinate changes).

\begin{figure}[t!]
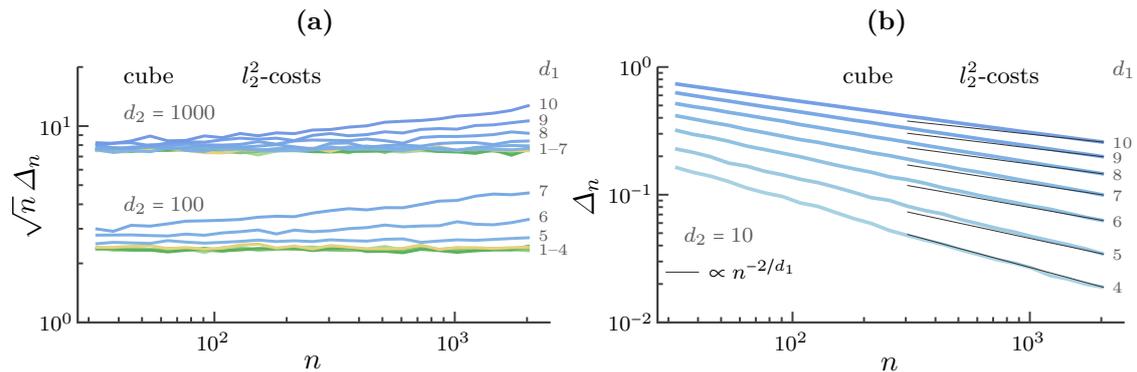

  {\small\hspace{3.9cm}\textbf{(a)}\hspace{7cm}\textbf{(b)}}\\
  \input{figures/tex_second-dimension-cubes-l2.tex}\hspace{-0.40cm}
  \input{figures/tex_rates-cubes-l2.tex}
  \caption{Additional simulations of the mean absolute deviation $\Delta_n$ in the
    \emph{cube} setting.
    \textbf{(a)} shows analogous simulations to \Cref{fig:smooth-nonsmooth} for
    higher dimensions $d_2 = 100$ and $d_2 = 1000$. \textbf{(b)} contrasts $\Delta_n$
    to the power law behavior $n^{-2/d_1}$ (black lines) which is expected
    asymptotically from \Cref{thm:SemiconcaveOT} and \Cref{ex:LowerBoundsSemiconcavecosts}.}
  \label{fig:rates}
\end{figure}

To understand the influence of a different choice of $d_2$, we also conducted
analogous simulations with $d_2 = 100$ and $d_2 = 1000$. While the basic
conclusions remained unchanged and the LCA principle could be confirmed (see
\Cref{fig:rates}a), the statistical fluctuation of the Monte-Carlo estimate of
$\Delta_n$ increased with increasing $d_2$ and larger sample sizes $n$ were
typically needed to observe linearity of $\Delta_n$ in the presented log-log
plots. In this regard, \Cref{fig:rates}b indicates for $d_2 = 10$ in the
\emph{cube} setting that maximal sample sizes of $n = 2^{11} = 2048$ might not
suffice to confidently discern the actual asymptotic convergence rates
$n^{-2/d_1}$ for $d_1 \ge 5$ (see Examples \ref{ex:SemiconcaveExamples} and
\ref{ex:LowerBoundsSemiconcavecosts}).

\begin{figure}[t!]
  \centering
  \input{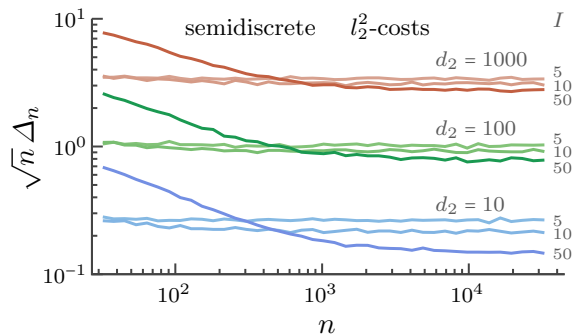}\hspace{-0.40cm}
  \caption{Simulations of the mean absolute deviation $\Delta_n$ in the
    \emph{semidiscrete} setting. The three curves per value of $d_2$ correspond
    to $I = 5, 10, 50$, respectively. Since the computational burden was
    significantly reduced compared to the \emph{cube} and \emph{sphere}
    settings, the range of $n$ could be increased substantially. Only this
    increased range makes the flattening of the curves for $I = 50$ apparent.
  }
  \label{fig:semidiscrete}
\end{figure}

Finally, we turn to the results obtained in the \emph{semidiscrete} setting.
According to upper bound~\eqref{eq:SemiDiscreteOTBound} established in
\Cref{thm:SemiDiscreteOT}, we anticipate asymptotically a convergence
rate of order  $n^{-1/2}$ for $\Delta_n$ independent of the choice of the cost $c$, the cardinality $I$ of $\XC$, and the dimension $d_2$ of the ambient space.
\Cref{fig:semidiscrete} confirms this expectation under smooth $l_2^2$ cost.
Indeed, in simulations with $I = 50$ (and higher), the convergence of $\Delta_n$
seems to be quicker than $n^{-1/2}$ at first, but eventually slows down for
sufficiently large $n$. Comparable results were observed for $l_1$
costs as well.

\section{Discussion}
\label{sec:Discussion}

In this work, we have established novel statistical guarantees for the empirical OT cost between \emph{different} probability measures, showing that the mean convergence rate is governed by the less complex measure.
In a broader sense, the LCA phenomenon suggests that the curse of dimensionality only affects the estimation of the OT cost when \emph{both} probability measure exhibit high intrinsic dimensions -- an observation with possibly significant repercussions for OT based data analysis applications, as the empirical OT functional automatically adapts to the complexity of the simpler measure and not to the ambient space.
In particular, our theory can also be applied for the popular Wasserstein distance $W_p(\mu, \nu) =\OT_{d^p}(\mu, \nu)^{1/p}$ with $p \ge 1$, since
\begin{equation*}
  \EV{\left|\hat{W}_{p,n} - W_p(\mu, \nu)\right|}\asymp \EV{\left|\hat\OT_{d^p,n} - \OT_{d^p}(\mu, \nu)\right|}
\end{equation*}
for fixed measures $\mu \neq \nu$ on compact metric spaces, where $\hat{W}_{p,n}^p = \hat\OT_{d^p,n}$.
  
Several extensions of our theory seem to be natural targets for future research. First, our arguments crucially rely on \emph{uniform} metric entropy bounds, which essentially restricts our results to bounded costs and spaces. A generalization to measure-dependent notions of metric entropy, or a technique to properly exploit the concentration of measures, might serve to overcome these limitations.
In a similar vein, it might be possible to adapt the LCA principle to more general notions of dimensionality. While our theory already includes general compact Lipschitz and $\CC^2$ surfaces in $\RR^d$, and even metric spaces with finite Minkowski-Bouligand dimension, it is as of yet unclear if general extensions to the Hausdorff dimension \cite[Chapter~4]{mattila1995geometry} or the (concentration-dependent) Wasserstein dimension \citep{weed2019sharp} are viable.

Another interesting problem is to find non-trivial settings where the upper bounds in \Cref{thm:AbstractUpperBound} fail to provide sharp rates. While it is easy to find simple examples where the rates suggested by \Cref{thm:LipschitzOT} and \ref{thm:SemiconcaveOT} are not sharp, the bottleneck for these examples is typically a suboptimal bound for the uniform metric entropy of $\FCot$ when applying \Cref{thm:AbstractUpperBound}.
For instance, additive costs of the form $c(x, y) = c_1(x) + c_2(y)$ always lead to the parametric convergence rate $n^{-1/2}$, which is not necessarily captured by \Cref{thm:LipschitzOT} and \ref{thm:SemiconcaveOT}. Adapting \Cref{thm:AbstractUpperBound} to these specific costs, however, yields the correct rate for non-constant costs.

We finally stress that our proof technique explicitly relies on empirical measure based estimators under i.i.d.\ observations. It would be interesting to analyze whether the same (or even faster) convergence rates can be verified for other estimators or for dependent observations.  In particular, it remains an open question  to what extent estimators leveraging  smoothness properties of the underlying measures, e.g., when $\mu$ and $\nu$ are measures on Riemannian manifolds with sufficiently regular densities with respect to the canonical volume forms, also obey the LCA principle.

\begin{ackno}
S. Hundrieser and T. Staudt gratefully acknowledge support from the DFG RTG 2088 and A. Munk of the DFG CRC 1456. S. Hundrieser and T. Staudt were in part funded by the DFG under Germany's Excellence Strategy - EXC 2067/1- 390729940. S. Hundrieser was also in part funded by the DFG RTG 2088. 
\end{ackno}



\begin{appendix}

\section{Bounds on the Uniform Metric Entropy}\label{sec:OmittedProofs}

In this appendix, we establish various upper bounds for the uniform metric entropy of the function class $\FCot$ defined by equation \eqref{eq:NiceTransforms}. To cover the settings introduced in \Cref{sec:Applications}, the following observation about uniform covering numbers under composition is useful.

\begin{lemma}[Composition bound]\label{lem:CompositionBound}
	Let $g \,\colon \UC\rightarrow \VC$ be a surjective map between sets $\UC$ and $\VC$, and let $\FC$ be a real-valued function class on $\VC$. For any $\epsilon > 0$, the class $\FC\circ g\coloneqq \{ f\circ g \mid f \in \FC\}$ satisfies
  \begin{equation*}
		\NC\big(\epsilon,{\FC}, \norm{\cdot}_{\infty, \VC}\big) \leq \NC\big(\epsilon,{\FC\circ g}, \norm{\cdot}_{\infty,\UC}\big).
	\end{equation*}
\end{lemma}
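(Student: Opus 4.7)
The plan is to exploit the fact that surjectivity of $g$ upgrades the obvious inequality $\norm{f_1\circ g - f_2\circ g}_{\infty,\UC} \le \norm{f_1-f_2}_{\infty,\VC}$ into an equality, so that pulling functions back along $g$ is an isometry between $(\FC,\norm{\cdot}_{\infty,\VC})$ and $(\FC\circ g,\norm{\cdot}_{\infty,\UC})$. Once this identity is in hand, any finite uniform cover of $\FC\circ g$ pushes down to a cover of $\FC$ of the same cardinality, which gives the desired inequality of covering numbers.

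More concretely, I would first handle the trivial case by assuming $N \coloneqq \NC(\epsilon, \FC\circ g, \norm{\cdot}_{\infty,\UC}) < \infty$. Then I would fix a minimal $\epsilon$-cover $\{f_1\circ g, \dots, f_N\circ g\}$ of $\FC\circ g$ under $\norm{\cdot}_{\infty,\UC}$, with $f_1,\dots,f_N \in \FC$. The central step is the identity
\begin{equation*}
\norm{f - f_i}_{\infty,\VC} = \sup_{v\in\VC} |f(v) - f_i(v)| = \sup_{u\in\UC}|f(g(u)) - f_i(g(u))| = \norm{f\circ g - f_i\circ g}_{\infty,\UC},
\end{equation*}
where the middle equality uses precisely that $g$ is surjective onto $\VC$. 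Given any $f\in\FC$, applying the cover to $f\circ g$ yields some index $i$ with the right-hand side bounded by $\epsilon$, so the identity forces $\norm{f-f_i}_{\infty,\VC}\le\epsilon$. Hence $\{f_1,\dots,f_N\}$ is an $\epsilon$-cover of $\FC$, proving $\NC(\epsilon, \FC, \norm{\cdot}_{\infty,\VC}) \le N$.

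There is essentially no obstacle here; the lemma is a one-line observation dressed up as a metric-entropy statement, and the only thing to be pedantic about is that surjectivity is used in exactly one place (the second equality above). The statement would fail without surjectivity, since a non-surjective $g$ could collapse large fluctuations of $f$ outside $g(\UC)$, making $\norm{\cdot}_{\infty,\UC}$ strictly smaller than $\norm{\cdot}_{\infty,\VC}$ and thereby allowing $\FC\circ g$ to be covered more efficiently than $\FC$.
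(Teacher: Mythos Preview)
Your isometry observation $\norm{f-f_i}_{\infty,\VC} = \norm{f\circ g - f_i\circ g}_{\infty,\UC}$ is correct and is morally the content of the lemma, but there is a gap tied to the paper's definition of $\NC(\epsilon,\cdot,\norm{\cdot}_\infty)$. That definition allows \emph{external} covers: the centers may be arbitrary real-valued functions on the domain, not necessarily elements of the class being covered. Hence a minimal $\epsilon$-cover of $\FC\circ g$ need not consist of functions of the form $f_i\circ g$ with $f_i\in\FC$, and you cannot simply read off the $f_i$ as you do. As written, your argument only establishes the inequality for internal covering numbers, which is a weaker statement since external covering numbers never exceed internal ones.

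The paper addresses this by taking an arbitrary cover $\{\tilde f_1,\dots,\tilde f_N\}$ of $\FC\circ g$ (with $\tilde f_i\colon\UC\to\RR$ not assumed constant on fibers of $g$) and constructing functions on $\VC$ via $f_i(v)\coloneqq\sup_{u\in g^{-1}(v)}\tilde f_i(u)$; surjectivity of $g$ makes this well-defined, and one checks directly that $|f(v)-f_i(v)|\le\epsilon$ on $\VC$ whenever $|f\circ g-\tilde f_i|\le\epsilon$ on $\UC$. In your language, this supremum-over-fibers map is a $1$-Lipschitz left inverse to the isometric embedding $f\mapsto f\circ g$, which is exactly what is needed to push external covers down from $\UC$ to $\VC$ without loss. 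With this single step added, your argument is complete.
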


\begin{proof}
Assume that $N \coloneqq \NC\big(\epsilon,\FC\circ g, \norm{\cdot}_{\infty, \UC}\big)$ is finite, otherwise the inequality is trivial. Let $\smash{\{\tilde f_1, \dots, \tilde f_N\}}$ be an $\epsilon$-covering of $\FC\circ g$ and let
\begin{equation*}
  f_i(v) \coloneqq \sup_{u\in g^{-1}(v)} \tilde{f}_i(u).
\end{equation*}
For any $f \in \FC$, there is an $\tilde f_i$ such that $|f\circ g - \tilde f_i| \leq  \epsilon$ on $\UC$. By definition of $f_i$, this implies $f - f_i \le \epsilon$ and $f - f_i \ge -\epsilon$ on $\VC$, which shows that $\{f_1, \ldots, f_N\}$ is an $\epsilon$-covering of $\FC$.
\end{proof}

We now provide upper bounds on the metric entropy of $\FCot$ under the respective assumptions \ref{as:LipschitzCosts}, \ref{as:SemiconcaveCosts}, and \ref{as:HolderCosts}. Due to the union bound (\Cref{lem:MetricEntropyUnionBound}) in conjunction with the composition bound (\Cref{lem:CompositionBound}), it is in all three cases sufficient to bound 
\begin{equation*}
  \log\NC\big(\epsilon, \FCot\circ g_i, \|\cdot\|_{\infty, \UC_i}\big)
\end{equation*}
for all $i\in\{1, \ldots, I\}$ and sufficiently small $\epsilon > 0$. For
convenience, we suppress the index $i$ in the following proofs and work with
generic $g \coloneqq g_i$ and $\UC \coloneqq \UC_i$, as well as $d\coloneqq d_i$ for \Cref{as:LipschitzCosts}.

\begin{lemma}[Metric entropy under Lipschitz costs]\label{lem:LipschitzMetricEntropy}
	Let $\XC$ and $\YC$ be Polish spaces and let $c\,\colon \XC\times \YC\rightarrow [0,1]$ be continuous so that \Cref{as:LipschitzCosts} is fulfilled. Then, for any $\epsilon>0$, 
  \begin{subequations}\label{eq:MetricEntropyLipschitz}
  \begin{align}\label{eq:MetricEntropyBoundLipschitz_Connected}
    \log \NC(\epsilon,\FCot, \norm{\cdot}_{\infty} )
  &\lesssim \sum_{i = 1}^{I}\left(\NC(\epsilon/4,\UC_i,d_i)+ \log(\epsilon^{-1})\right)
  \intertext{Moreover, without the connectedness assumption on $\UC_i$ in \ref{as:LipschitzCosts}, it holds that}
  \label{eq:MetricEntropyBoundLipschitz_Disconnected}
  \log \NC(\epsilon,\FCot, \norm{\cdot}_{\infty} ) 
  &\lesssim \sum_{i = 1}^{I}\NC(\epsilon/4,\UC_i,d_i)\log(\epsilon^{-1}).
  \end{align}
  \end{subequations}
  The implicit constants in  \eqref{eq:MetricEntropyLipschitz} are universal. 
\end{lemma}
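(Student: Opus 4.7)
The strategy is to pull the covering problem for $\FCot$ back to the parameter spaces $\UC_i$, where it reduces to bounding the uniform metric entropy of the unit ball of bounded $1$-Lipschitz functions, and then to invoke a Kolmogorov--Tikhomirov type discretization, refined for the connected case.

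First, I would apply \Cref{lem:MetricEntropyUnionBound} to the decomposition $\XC = \bigcup_{i=1}^I g_i(\UC_i)$ followed by \Cref{lem:CompositionBound} applied to each surjection $g_i\colon\UC_i\to g_i(\UC_i)$, which reduces the task to controlling $\log\NC\bigl(\epsilon, \FCot|_{g_i(\UC_i)}\circ g_i, \|\cdot\|_\infty\bigr)$ for each $i$. The structural observation is that this pulled-back class is contained in the set of $1$-Lipschitz functions on $(\UC_i, d_i)$ taking values in $[-1,1]$: for any $f = h^c \in \FCot$ with $h\colon\YC\to\RR$,
\[
  (f\circ g_i)(u) = \inf_{y\in\YC}\bigl[c(g_i(u), y) - h(y)\bigr]
\]
is $1$-Lipschitz as an infimum of the uniformly $1$-Lipschitz functions $u\mapsto c(g_i(u), y) - h(y)$ provided by \Cref{as:LipschitzCosts}, while $\|f\|_\infty \leq 1$ transfers to $f\circ g_i$.

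Second, I would estimate the uniform metric entropy of the class $\LC_i$ of $1$-Lipschitz functions $h\colon\UC_i\to[-1,1]$ via a standard discretization. Pick a minimal $(\epsilon/4)$-covering $\{u_1,\ldots,u_K\}$ of $\UC_i$ with $K = \NC(\epsilon/4,\UC_i, d_i)$ and discretize the codomain $[-1,1]$ on a grid of spacing of order $\epsilon$ with $M = O(\epsilon^{-1})$ nodes; any $h\in\LC_i$ is then determined up to uniform error $\epsilon$ by the discretized values $\hat h(u_k)$ for $k\leq K$. In the disconnected case, a crude count yields at most $M^K$ such tuples, so $\log\NC(\epsilon,\LC_i,\|\cdot\|_\infty)\lesssim K\log(\epsilon^{-1})$, which summed over $i$ gives~\eqref{eq:MetricEntropyBoundLipschitz_Disconnected}.

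For the sharper bound~\eqref{eq:MetricEntropyBoundLipschitz_Connected} in the connected case, I would argue that connectedness of $\UC_i$ promotes the intersection graph of the $(\epsilon/4)$-balls around $u_1,\ldots,u_K$ to a connected graph, which therefore admits a spanning tree with $K-1$ edges linking pairs of covering points at distance at most $\epsilon/2$. Discretizing $h(u_1)$ costs $O(\epsilon^{-1})$ choices, and Lipschitz continuity along each tree edge restricts $\hat h$ at the child node to within $O(1)$ grid slots of its parent, yielding a covering of cardinality at most $O(\epsilon^{-1})\cdot C^K$ and hence $\log\NC(\epsilon,\LC_i,\|\cdot\|_\infty)\lesssim K + \log(\epsilon^{-1})$. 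The main technical point is verifying that topological connectedness of $\UC_i$ transfers to this ball-intersection graph; this is handled by the standard argument that any path in $\UC_i$ joining two covering points can be traced through a finite sequence of overlapping $(\epsilon/4)$-balls. Summing over $i$ through the first-paragraph reduction then yields both halves of~\eqref{eq:MetricEntropyLipschitz}.
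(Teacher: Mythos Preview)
Your proposal is correct and follows the same route as the paper: reduce via \Cref{lem:MetricEntropyUnionBound} and \Cref{lem:CompositionBound} to bounding the uniform entropy of the pulled-back class $\FCot\circ g_i$, observe that this class sits inside the unit ball of bounded $1$-Lipschitz functions on $(\UC_i,d_i)$, and then invoke the Kolmogorov--Tikhomirov entropy bounds. The only difference is cosmetic: the paper cites the two inequalities \eqref{eq:LipschitzBound2}--\eqref{eq:LipschitzBound1} from \cite{Kolmogorov1961} as black boxes (noting that a minor adaptation is needed to drop the centrability hypothesis), whereas you sketch their proof directly via the discretization/spanning-tree argument, which is precisely the content of that reference.
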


\begin{proof}
  By \Cref{as:LipschitzCosts}, it holds that $c(g(\cdot), y)$ is 1-Lipschitz for each $y\in\YC$. Hence, the class $\FCot\circ g$ is contained in $\mathrm{BL}_1(\UC, d)$, which denotes the 1-Lipschitz functions on $(\UC, d)$ that are absolutely bounded by one \citep[Section 1.2]{santambrogio2015optimal}. For connected $\UC$, their uniform metric entropy is bounded by \citep[Section~9]{Kolmogorov1961}
\begin{subequations}
\begin{equation}\label{eq:LipschitzBound2}
  \NC\big(\epsilon, \mathrm{BL}_{1}(\UC,d), \norm{\cdot}_{\infty, \UC}\big) \leq \left( 2 \left\lceil 2/\epsilon\right\rceil + 1\right)\,2^{\NC(\epsilon/4, \UC, d)},
\end{equation}
while general metric spaces only permit the bound
\begin{equation}\label{eq:LipschitzBound1}
  \NC\big(\epsilon, \mathrm{BL}_{1}(\UC,d), \norm{\cdot}_{\infty, \UC}\big) \leq \left( 2 \left\lceil 2/\epsilon\right\rceil + 1\right)^{\NC(\epsilon/4, \UC, d)}.
\end{equation}
\end{subequations}
This implies claim \eqref{eq:MetricEntropyLipschitz}. 
Note that \eqref{eq:LipschitzBound2} is a variation of equation (238) in \cite{Kolmogorov1961},
which only proves the stated bound for connected subsets of a \emph{centrable} metric space (with some improvements, e.g., $\epsilon/4$ can be relaxed to $s\epsilon/(s+1)$ for $s\in\NN$ at the cost of a possibly worse constant).
However, with minor adaptions, the proof also works without requiring centrability for $\epsilon/4$.
\end{proof}

\begin{lemma}[Metric entropy under semi-concave costs]\label{lem:SemiconcaveMetricEntropy}
Let $\XC$ and $\YC$ be Polish spaces and let $c\,\colon \XC\times \YC\rightarrow [0,1]$ be continuous so that \Cref{as:SemiconcaveCosts} is fulfilled. Then, for $\epsilon>0$ sufficiently small, 
\begin{equation}\label{eq:MetricEntropyBoundSemiConcave}
  \log \NC(\epsilon,\FCot, \norm{\cdot}_{\infty} )
	\lesssim I\epsilon^{-d/2},
\end{equation}
where the implicit constant depends on the sets $\UC_1, \dots, \UC_I\subset\RR^d$. 
\end{lemma}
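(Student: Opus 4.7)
The plan is to reduce, via the union bound (Lemma~\ref{lem:MetricEntropyUnionBound}) and the composition bound (Lemma~\ref{lem:CompositionBound}), to controlling the uniform metric entropy of $\FCot \circ g_i$ on each $\UC_i \subset \RR^d$, and then to invoke the classical Bronshtein--Guntuboyina--Sen metric entropy estimate for bounded Lipschitz semi-concave functions on a bounded convex domain.

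\textbf{Step 1: Reduction.} Writing $\XC_i \coloneqq g_i(\UC_i)$ and applying Lemma~\ref{lem:MetricEntropyUnionBound}, one has
\begin{equation*}
  \log \NC(\epsilon, \FCot, \norm{\cdot}_\infty) \le \sum_{i=1}^I \log \NC(\epsilon, \FCot|_{\XC_i}, \norm{\cdot}_{\infty, \XC_i}).
\end{equation*}
For each $i$, the map $g_i \,\colon \UC_i \to \XC_i$ is surjective by definition of $\XC_i$, so Lemma~\ref{lem:CompositionBound} gives
\begin{equation*}
  \log \NC(\epsilon, \FCot|_{\XC_i}, \norm{\cdot}_{\infty,\XC_i}) \le \log \NC(\epsilon, \FCot \circ g_i, \norm{\cdot}_{\infty, \UC_i}).
\end{equation*}
Hence it suffices to show $\log \NC(\epsilon, \FCot \circ g_i, \norm{\cdot}_{\infty,\UC_i}) \lesssim \epsilon^{-d/2}$ for each fixed $i$, with constants depending on $\UC_i$.

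\textbf{Step 2: Regularity of $f \circ g_i$.} For $f \in \FCot$, $c$-concavity yields $f = f^{cc}$, hence
\begin{equation*}
  (f \circ g_i)(u) = \inf_{y \in \YC} \big(c(g_i(u), y) - f^c(y)\big), \qquad u \in \UC_i.
\end{equation*}
By Assumption~\ref{as:SemiconcaveCosts}, each function $u \mapsto c(g_i(u), y) - f^c(y)$ is $1$-Lipschitz and $1$-semi-concave on the convex set $\UC_i$. Pointwise infima preserve both $1$-Lipschitz continuity and $1$-semi-concavity: for semi-concavity, note that $u \mapsto c(g_i(u), y) - f^c(y) - \|u\|^2$ is concave on $\UC_i$, the pointwise infimum over $y$ of concave functions is concave, and this infimum equals $(f \circ g_i)(u) - \|u\|^2$. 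Combined with $\|f\|_\infty \le 1$ (hence $\|f \circ g_i\|_\infty \le 1$), this shows that $\FCot \circ g_i$ is contained in the class $\mathcal{S}(\UC_i)$ of functions on $\UC_i$ that are $1$-bounded, $1$-Lipschitz, and $1$-semi-concave.

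\textbf{Step 3: Metric entropy of $\mathcal{S}(\UC_i)$.} Subtracting the fixed function $u \mapsto \|u\|^2$ (bounded on the bounded set $\UC_i$) is an isometry with respect to $\|\cdot\|_\infty$ and maps $\mathcal{S}(\UC_i)$ into a class of uniformly bounded, Lipschitz, concave functions on the bounded convex domain $\UC_i$ (with constants depending on $\operatorname{diam}(\UC_i)$). The classical bound of \cite{bronshtein1976varepsilon}, in the sharpened form of \cite{guntuboyina2012covering}, asserts that the uniform metric entropy of such a class on a bounded convex subset of $\RR^d$ is bounded by $C_{\UC_i}\,\epsilon^{-d/2}$ for $\epsilon$ small enough, where $C_{\UC_i}$ depends on the diameter and shape of $\UC_i$. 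Combining with Steps~1 and~2 and summing over $i$ yields \eqref{eq:MetricEntropyBoundSemiConcave}.

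The main technical point is Step~2, specifically the verification that semi-concavity (with a uniform modulus) is preserved under pointwise infima via the explicit shift by $\|u\|^2$; everything else is bookkeeping on top of the Bronshtein--Guntuboyina--Sen estimate.
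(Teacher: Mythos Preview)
Your approach is essentially the same as the paper's: reduce via the union and composition bounds, pass from $\FCot\circ g_i$ to a class of concave, Lipschitz, bounded functions by subtracting $\|u\|^2$, and invoke the Bronshtein / Guntuboyina--Sen entropy bound. Your Step~2 argument (that semi-concavity with modulus $1$ passes through the infimum because the shifted functions are concave) is exactly the mechanism the paper relies on, though the paper states it more tersely.

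There is one technical point you gloss over in Step~3. The references you cite (\cite{bronshtein1976varepsilon} and \cite{guntuboyina2012covering}) formulate their $\epsilon^{-d/2}$ bound for convex/concave functions on a closed \emph{cube} (or hyperrectangle), not on an arbitrary bounded convex set $\UC_i$. The paper therefore does not apply the bound directly on $\UC_i$: it first uses the concave extension theorem of \cite{dragomirescu1992smallest} to extend the concave, Lipschitz function $u\mapsto f\circ g_i(u) - \|u\|^2$ from $\UC_i$ to all of $\RR^s$ with the same Lipschitz modulus, and then restricts to a closed cube $\DC\supset\UC_i$ on which the extension is uniformly bounded. Only after this extension is the Bronshtein/Guntuboyina--Sen estimate applied (on $\DC$). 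Along the way the paper also notes that $\UC_i$ may have affine dimension $s<d$, in which case one works in $\RR^s$ and trivially bounds $\epsilon^{-s/2}\le\epsilon^{-d/2}$. Your sketch would become complete once this extension-to-a-cube step is inserted; without it, the citation in Step~3 does not literally cover the domain $\UC_i$.
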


\begin{proof}
Let $s \coloneqq \dim\big(\mathrm{span}(\UC - u)\big) \le d$ for an arbitrary $u\in \UC$. If $s = 0$, the metric entropy of $\FC_c\circ g$ is bounded as in \eqref{eq:MetricEntropyDiscrete}, so we consider $s \ge 1$. By translation and rotation, we may w.l.o.g.\ assume that $\UC$ is a bounded convex subset of $\RR^{s}$ that contains the origin.
By \Cref{as:SemiconcaveCosts} and the properties of the $c$-transform, any $f\in \FCot$ is absolutely bounded by one, and the composition $f\circ g$ is $1$-Lipschitz and $1$-semi-concave on $\UC$.
Thus, the function $u\mapsto f\circ g(u) - \norm{u}^2$ is concave, $L$-Lipschitz with $L \coloneqq 1+2\,\diam{\UC}$, and absolutely bounded by $1+\diam{\UC}^2$. According to \citet[Theorem 1 and Remark 2(ii)]{dragomirescu1992smallest} there exists a concave extension $\tilde f$ of this function to $\RR^s$ with identical Lipschitz-modulus. 
If $\DC\subset\RR^s$ denotes a bounded closed cube that contains $\UC$, then $\tilde{f}$ is absolutely bounded on $\DC$ by $B \coloneqq 1 + \diam{\UC}^2 + L\,\diam{\DC}$. Denoting
the class of concave functions on $\DC$ that are absolutely bounded by $B$ and $L$-Lipschitz by $\mathrm{C}_{B, L}(\DC)$, we conclude for small $\epsilon > 0$
\begin{equation*}
  \NC\big(\epsilon, {\FCot}\circ g, \norm{\cdot}_{\infty, \UC}\big)
  = \NC\big(\epsilon, {\FCot}\circ g - \norm{\cdot}^2, \norm{\cdot}_{\infty, \UC}\big)
  \leq \NC\big(\epsilon, \mathrm{C}_{B, L}(\DC), \norm{\cdot}_{\infty, \DC}\big)
  \lesssim \epsilon^{-s/2}
  \leq \epsilon^{-d/2},
\end{equation*} 
where we used uniform metric entropy bounds for the class $C_{B, L}(\DC)$ provided in \cite{bronshtein1976varepsilon} and \cite{guntuboyina2012covering}.
The implicit constants depend on $B$, $L$, and $\DC$, which in turn depend on $\UC$. 
\end{proof}

\begin{lemma}[Metric entropy under H\"older costs]\label{lem:HoelderMetricEntropy}
	Let $\XC$ and $\YC$ be Polish spaces and let $c\,\colon \XC\times \YC\rightarrow [0,1]$ be continuous so that \Cref{as:HolderCosts} is fulfilled for some $\alpha\in (0,2]$. Then, for $\epsilon>0$ sufficiently small, 
	\begin{equation}
  \log \NC(\epsilon, \FCot, \norm{\cdot}_\infty)\lesssim I \epsilon^{-d/\alpha},
\end{equation}
where the implicit constant depends on $\alpha$ and the sets $\UC_1, \dots, \UC_I\subset\RR^d$.
\end{lemma}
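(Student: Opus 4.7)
The plan mirrors the proofs of Lemmas \ref{lem:LipschitzMetricEntropy} and \ref{lem:SemiconcaveMetricEntropy}. By Lemma~\ref{lem:MetricEntropyUnionBound} combined with Lemma~\ref{lem:CompositionBound}, it suffices to show that
\begin{equation*}
  \log \NC\bigl(\epsilon, \FCot \circ g_i, \|\cdot\|_{\infty,\UC_i}\bigr) \lesssim \epsilon^{-d/\alpha}
\end{equation*}
for each $i \in \{1, \ldots, I\}$ and sufficiently small $\epsilon > 0$; summing over $i$ then produces the claim. The strategy is to embed $\FCot \circ g_i$ into the class of $(\alpha, C)$-Hölder smooth functions on the compact convex set $\UC_i \subset \RR^d$ for a constant $C$ depending only on $\alpha$, $d$, and $\diam{\UC_i}$, and to invoke the classical Kolmogorov--Tikhomirov bound of order $\epsilon^{-d/\alpha}$ for such Hölder classes on bounded convex domains in $\RR^d$.

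The central step is therefore to show that every $f \in \FCot$ yields a function $f \circ g_i$ that is $(\alpha, C)$-Hölder smooth on $\UC_i$. For $\alpha \in (0, 1]$, this is immediate from $c$-concavity: writing $f = f^{cc}$ gives
\begin{equation*}
  f \circ g_i(u) = \inf_{y \in \YC} \bigl(c(g_i(u), y) - f^c(y)\bigr),
\end{equation*}
and since $c(g_i(\cdot), y)$ shares the same $\alpha$-Hölder modulus for every $y \in \YC$, the infimum inherits it. Together with the uniform bound $\|f\|_\infty \le 1$ from the definition of $\FCot$, this yields the required Hölder smoothness after absorbing a constant.

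For $\alpha \in (1, 2]$, preservation of $C^{1,\alpha-1}$ regularity under pointwise infima fails in general, and this is the main obstacle. Following the strategy of \cite{Manole21}, the plan is to first extend each $c(g_i(\cdot), y)$ to an $(\alpha, C')$-Hölder function on an open convex neighborhood of $\UC_i$ via Whitney extension \cite{whitney1934analytic}, and then exploit the identity $f = f^{cc}$ together with quantitative second-order difference estimates on the cost to verify that $f \circ g_i$ is differentiable with $(\alpha-1, C)$-Hölder partial derivatives on $\UC_i$, uniformly in $f \in \FCot$. The boundary case $\alpha = 2$ reduces to semi-concavity (and coincides with Lemma~\ref{lem:SemiconcaveMetricEntropy} up to constants), while the intermediate range $\alpha \in (1, 2)$ requires the more delicate Hölder-gradient bookkeeping. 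Once the uniform $(\alpha, C)$-Hölder smoothness of the elements of $\FCot \circ g_i$ is established, the classical metric entropy bound $\lesssim \epsilon^{-d/\alpha}$ for Hölder classes closes the proof.
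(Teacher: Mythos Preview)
Your treatment of the case $\alpha\in(0,1]$ is correct and matches the paper: $c$-concavity passes the $\alpha$-H\"older modulus to $f\circ g_i$, and the Kolmogorov--Tikhomirov bound (equivalently, the Lipschitz bound with respect to the metric $\|\cdot\|^\alpha$) gives the entropy $\epsilon^{-d/\alpha}$.

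For $\alpha\in(1,2]$, however, your plan has a genuine gap. The assertion that every $f\in\FCot$ yields $f\circ g_i\in C^{1,\alpha-1}(\UC_i)$ is false, and no amount of second-order difference bookkeeping on the cost will rescue it. A concrete counterexample: take $\XC=\YC=[-1,1]$ and $c(x,y)=(x-y)^2$, which is $(\alpha,\Lambda)$-H\"older for every $\alpha\in(1,2]$. The function $f(x)=-|x|$ is $c$-concave (since $x^2-f(x)=x^2+|x|$ is convex, and one checks $f=f^{cc}$ on a neighbourhood of the origin), bounded, and has bounded $c$-transform, so after rescaling it lies in $\FCot$. But $f$ is not differentiable at $0$, hence not in $C^{1,\alpha-1}$ for any $\alpha>1$. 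More generally, the infimum structure $f\circ g_i(u)=\inf_y c(g_i(u),y)-f^c(y)$ only transmits a \emph{one-sided} second-order bound (semi-concavity type), never two-sided $C^{1,\alpha-1}$ control. Consequently you cannot embed $\FCot\circ g_i$ into a H\"older ball and invoke the classical $\epsilon^{-d/\alpha}$ entropy bound.

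The paper's argument for $\alpha\in(1,2]$ is genuinely different and does not attempt to show $f\circ g_i$ is smooth. Instead it mollifies the cost: for each $\sigma\in(0,1]$ one constructs (via convolution, this is Lemma~\ref{lem:ExistenceSmoothFunctions} and also the content of the cited Lemma~8 in \cite{Manole21}) a $C^2$ cost $c_\sigma$ with $\|c(g(\cdot),y)-c_\sigma(\cdot,y)\|_\infty\lesssim\sigma^\alpha$ and $\|c_\sigma(\cdot,y)\|_{C^2}\lesssim\sigma^{\alpha-2}$. The associated function $f_\sigma(u)=\inf_y c_\sigma(u,y)-f^c(y)$ is then semi-concave with modulus $\lesssim\sigma^{\alpha-2}$ and approximates $f\circ g$ to within $\sigma^\alpha$. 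Choosing $\sigma=\sigma(\epsilon)\asymp\epsilon^{1/\alpha}$ and applying the semi-concave entropy bound of Lemma~\ref{lem:SemiconcaveMetricEntropy} at the rescaled radius $\epsilon/\Gamma_{\sigma(\epsilon)}$ yields
\[
\log\NC(\epsilon,\FCot\circ g,\|\cdot\|_\infty)\lesssim\bigl(\epsilon/\sigma(\epsilon)^{\alpha-2}\bigr)^{-d/2}\asymp\epsilon^{-d/\alpha}.
\]
This interpolation between the Lipschitz ($\epsilon^{-d}$) and semi-concave ($\epsilon^{-d/2}$) regimes is the missing idea in your proposal.
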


\begin{proof}
We consider $\alpha \in (0,1]$ first. An $(\alpha, 1)$-H\"older function with respect to the Euclidean norm $\|\cdot\|$ is a $1$-Lipschitz function with respect to the metric induced by $\|\cdot\|^\alpha$. It follows by \Cref{as:HolderCosts} that $\FCot \circ g\subseteq \BL{\UC, \|\cdot\|^\alpha}$ (see the proof of \Cref{lem:LipschitzMetricEntropy}).
Furthermore, each function in $\BL{\UC, \|\cdot\|^\alpha}$ can be extended to an element
in $\BL{\DC, \|\cdot\|^\alpha}$, where $\DC\subset\RR^d$ is bounded, connected, and contains $\UC$ \citep[Corollary~2]{mcshane1934extension}. Thus,
noting that $\NC(\epsilon, \DC, \|\cdot\|^\alpha) = \NC(\epsilon^{1/\alpha}, \DC, \|\cdot\|) \lesssim \epsilon^{-d/\alpha}$ and employing \eqref{eq:LipschitzBound2}, we find
\begin{equation*}
  \log\NC(\epsilon, \FCot \circ g, \norm{\cdot}_{\infty, \UC})
  \leq
  \log\NC\big(\epsilon, \BL{\DC, \|\cdot\|^\alpha}, \norm{\cdot}_{\infty, \DC}\big)
  \lesssim
  \epsilon^{-d/\alpha}.
\end{equation*}

For $\alpha\in (1,2]$, we apply \Cref{lem:ExistenceSmoothFunctions} to $c(g(\cdot), y)$ for each $y\in\YC$ separately to define a collection of smoothed, approximated cost functions $c_{\sigma}\colon \DC\times \YC \to \RR$ for $\sigma\in(0, 1]$, where $\DC\subseteq \RR^d$ contains $\UC$ and is convex, open, and bounded. Furthermore, there is $K > 0$ so that the functions $c_\sigma$ satisfy, for all $y\in\YC$,
\begin{equation}\label{eq:NiceCosts} 
 \norm{c(g(\cdot),y) - c_{\sigma}(\cdot,y)}_{\infty, \UC}\leq K \sigma^{\alpha}
 \quad\text{and}\quad
 \norm{c_{\sigma}(\cdot,y)}_{\CC^2(\DC)}\leq K \sigma^{\alpha-2}\eqqcolon \Gamma_\sigma,
\end{equation}
where the $\CC^2(\DC)$-norm of a twice continuously differentiable function $f\colon\DC \rightarrow \RR$ is
\begin{equation*}
\norm{f}_{\CC^{2}(\DC)} \coloneqq \max_{|\beta| \le 2} \norm{ D^\beta f}_{\infty, \DC}, \quad \text{where} \quad D^\beta f = \partial^{|\beta|}f / \partial x_1^{\beta_1} \cdots \partial x_d^{\beta_d} \text{ for } \beta \in \NN^d_0.
\end{equation*}
Note that a function with $\|f\|_{\CC^2(\DC)} \le \Gamma$ for $\Gamma > 0$ is absolutely bounded by $\Gamma$, $\Gamma$-Lipschitz, and $d\Gamma$-semi-concave (since the Eigenvalues of its Hessian are bounded by $d\cdot\Gamma$). 
For each $f \in \FCot$, we define
$f_\sigma\colon \DC\rightarrow \RR, u \mapsto \inf_{y\in \YC} c_\sigma(u,y) - f^{c}(y)$. Due to $f = f^{cc}$ \citep[Proposition~1.34]{santambrogio2015optimal} combined with the first inequality in \eqref{eq:NiceCosts}, we conclude $|f\circ g - f_\sigma|\leq K \sigma^{\alpha}$ on $\UC$. For $\sigma(\epsilon)\coloneqq (\epsilon/2K)^{1/\alpha}$, this implies $|f\circ g - f_{\sigma(\epsilon)}| \le \epsilon/2$ on $\UC$. Consequently, defining $\FC_{c,\sigma}\coloneqq \{f_\sigma | f \in \FCot\}$, 
\begin{equation*}
  \NC\big(\epsilon, \FC\circ g, \norm{\cdot}_{\infty, \UC}\big) \leq \NC\big(\epsilon/2, \FC_{c,\sigma(\epsilon)}, \norm{\cdot}_{\infty, \UC}\big) \leq \NC\big(\epsilon/2, \FC_{c,\sigma(\epsilon)}, \norm{\cdot}_{\infty, \DC}\big).
\end{equation*}
Since the functions $c_{\sigma}(\cdot,y) / d\Gamma_\sigma$ are bounded by one, $1$-Lipschitz, and $1$-semi-concave, we can apply the
metric entropy bounds derived in the proof of \Cref{lem:SemiconcaveMetricEntropy} to conclude
\begin{equation*}
 \log\NC\left(\frac{\epsilon}{2}, \FC_{\sigma(\epsilon)}, \norm{\cdot}_{\infty, \DC}\right) = \log \NC\left(\frac{\epsilon}{2d\Gamma_{\sigma(\epsilon)}}, \frac{\FC_{\sigma(\epsilon)}}{d\Gamma_{\sigma(\epsilon)}}, \norm{\cdot}_{\infty, \DC}\right) \lesssim \left(\frac{\epsilon}{\Gamma_{\sigma(\epsilon)}}\right)^{-d/2} \asymp \epsilon^{-d/\alpha},
\end{equation*}
	where the constants depend on $\alpha$ and  $\DC$, which in turn depends on $\UC$.
\end{proof}

\begin{lemma}\label{lem:ExistenceSmoothFunctions}
  Let $\DC\subset \RR^d$ be bounded, convex, and open, and let $\UC\subset\DC$ be a compact and convex subset. 
  Then, there exists $K>0$ such that for any $(\alpha, 1)$-H\"older function $h$ on $\UC$ with $1 < \alpha \le 2$ there is a collection of smooth functions  $h_\sigma\colon\DC\to\RR$ such that 
\begin{equation}\label{eq:NiceProperties}
		\norm{h-h_\sigma}_{\infty, \UC} \leq K\sigma^\alpha
    \qquad
    \text{and}
    \qquad
    \norm{h_\sigma}_{\CC^2(\DC)}\leq K \sigma^{\alpha-2}
    \qquad
    \text{for }
    \sigma\in (0,1].
	\end{equation}
\end{lemma}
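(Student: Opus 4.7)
The strategy is a standard extension-plus-mollification argument: I would first extend $h$ to a $(\alpha, K_1)$-H\"older smooth function $\tilde h$ on all of $\RR^d$, with H\"older constant $K_1$ depending only on $\UC$ and $\alpha$, and then define $h_\sigma \coloneqq (\tilde h * \phi_\sigma)|_\DC$, where $\phi_\sigma(x) = \sigma^{-d}\phi(x/\sigma)$ is the rescaling of a smooth, nonnegative, radially symmetric mollifier $\phi\colon \RR^d\to[0,\infty)$ supported in the unit ball with $\int\phi = 1$. The existence of such a $\tilde h$ is furnished by Whitney's extension theorem for $\CC^{1,\alpha-1}$ functions on closed sets (or by Stein's extension theorem); compactness of $\UC$ and the uniform bound of $1$ on all H\"older seminorms of $h$ ensure that $K_1$ can be chosen independently of $h$.

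For the approximation bound on $\UC$, I would Taylor expand
\begin{equation*}
  \tilde h(x - y) = \tilde h(x) - \nabla\tilde h(x)\cdot y + R(x,y), \qquad |R(x,y)|\lesssim \norm{y}^\alpha,
\end{equation*}
where the remainder bound follows from $(\alpha-1)$-H\"older continuity of $\nabla\tilde h$ via integration along the segment $[x - y, x]$. The radial symmetry of $\phi_\sigma$ forces $\int y\,\phi_\sigma(y)\,\dif y = 0$, so
\begin{equation*}
  h_\sigma(x) - \tilde h(x) = \int R(x,y)\,\phi_\sigma(y)\,\dif y,
\end{equation*}
which, after the substitution $y = \sigma z$, produces $\norm{h - h_\sigma}_{\infty,\UC}\lesssim \sigma^\alpha$. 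For the $\CC^2$-norm, $\norm{h_\sigma}_\infty$ and $\norm{\nabla h_\sigma}_\infty$ are trivially dominated by $\norm{\tilde h}_\infty$ and $\norm{\nabla\tilde h}_\infty$, so only the second derivatives need care. Here the key observation is
\begin{equation*}
  \partial_i\partial_j h_\sigma(x) = \int \left[\partial_i\tilde h(x-y) - \partial_i\tilde h(x)\right]\partial_j\phi_\sigma(y)\,\dif y,
\end{equation*}
using the zero-mean identity $\int\partial_j\phi_\sigma(y)\,\dif y = 0$ to subtract $\partial_i\tilde h(x)$ inside the integrand. Combined with the $(\alpha-1)$-H\"older bound $|\partial_i\tilde h(x-y) - \partial_i\tilde h(x)|\lesssim \norm{y}^{\alpha-1}$ and the substitution $y = \sigma z$, this yields the desired $\sigma^{\alpha-2}$ bound; since $\sigma\le 1$ and $\alpha - 2\le 0$, the lower-order derivative bounds are automatically subsumed into $K\sigma^{\alpha-2}$.

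The hard part is the extension step. For $\alpha\in(0,1]$ a routine McShane--Whitney construction would suffice, but for $\alpha\in(1,2]$ one must additionally prescribe the gradient of $\tilde h$ consistently with that of $h$ on the boundary of $\UC$, which is the content of Whitney's higher-order extension theorem. Fortunately, the compactness and convexity of $\UC$ simplify matters considerably: an explicit extension can be built from the orthogonal projection $p\colon \RR^d\to \UC$ by setting $\tilde h(x) = h(p(x)) + \nabla h(p(x))\cdot(x - p(x))$ for $x\notin\UC$, which inherits the required regularity from the $(\alpha-1)$-H\"older continuity of $\nabla h$ and the $1$-Lipschitz property of $p$, with a constant depending only on $\diam(\UC)$ and $\alpha$.
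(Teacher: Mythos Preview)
Your main argument is correct and follows the paper's approach: extend $h$ to an $(\alpha,K')$-H\"older function $\tilde h$ on $\RR^d$ via Stein's (or Whitney's) extension theorem, then mollify. The approximation bound $\norm{h-h_\sigma}_{\infty,\UC}\lesssim\sigma^\alpha$ is handled identically in both. For the second-derivative bound you split the derivatives as $\partial_i\partial_j h_\sigma = (\partial_i\tilde h)\ast(\partial_j\phi_\sigma)$ and exploit $\int\partial_j\phi_\sigma = 0$ together with the $(\alpha-1)$-H\"older continuity of $\partial_i\tilde h$, whereas the paper subtracts the first-order Taylor jet of $\tilde h$ at $u_0$ and places both derivatives on the mollifier. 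Both routes are valid; yours is slightly more direct but requires the extension to be genuinely $\CC^1$, while the paper's only needs the Taylor remainder bound $|\tilde R_{u_0}(u)|\lesssim\norm{u-u_0}^\alpha$.

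The explicit projection-based extension you propose at the end, however, does not work in the full range $1<\alpha\le 2$. The formula $\tilde h(x) = h(p(x)) + \nabla h(p(x))\cdot(x - p(x))$ need not even be $\CC^1$ when $\alpha\le 3/2$. For a concrete failure, take $\UC=\bar B_1\subset\RR^2$ and $h(x,y) = (1-x)^{\alpha}/\alpha$, which is $(\alpha,C)$-H\"older on $\UC$. At $x_0 = (1+\epsilon,0)$ with small $\epsilon>0$, in the tangential direction $v=(0,1)$, a direct computation gives $\tilde h(x_0+tv)-\tilde h(x_0)\sim -C'\epsilon\,|t|^{2(\alpha-1)}$ as $t\to 0$, so the directional derivative fails to exist once $2(\alpha-1)\le 1$. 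The underlying obstruction is that moving $x$ tangentially moves $p(x)$ along $\partial\UC$, and differentiating the factor $\nabla h(p(x))$ then asks for a derivative of $\nabla h$, which is only $(\alpha-1)$-H\"older. Since you already invoke the Whitney/Stein extension theorem earlier in the proposal, the fix is simply to rely on that and drop the claimed explicit construction.
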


\begin{proof}
  Recall the definition of $(\alpha, \Lambda)$-H\"older smooth functions for $1 < \alpha \le 2$ and $\Lambda > 0$ from \Cref{ssec:holder}, and let $u, u_0 \in \UC$. If we denote $z \coloneqq u - u_0$, then the mean value theorem asserts the existence of $t\in[0, 1]$ such that $h(u) = h(u_0) + \langle\nabla h(u_0 + tz), z\rangle$. This implies
\begin{equation*}
  h(u) = h(u_0) + \langle\nabla h(u_0), z\rangle + R_{u_0}(u),
  \quad\text{where}\quad
  R_{u_0}(u) = \langle\nabla h(u_0 + t z) - \nabla h(u_0), z\rangle.
\end{equation*}
Due to the $(\alpha - 1, 1)$-H\"older smoothness of the partial derivatives of $h$,
we find
\begin{equation}\label{eq:HolderResidual}
  |R_{u_0}(u)| \le \| \nabla h(u_0 + tz) - \nabla h(u_0)\| \|z\| \le \sqrt{d}\,\|u - u_0\|^\alpha.
\end{equation}
This shows that the function $h$ is an element of the class $\mathrm{Lip}(\alpha, \UC)$ defined in \citet[Chapter VI, Section 3]{stein1971singular}. By Theorem~4 in the same reference, the function $h$ admits an extension $\tilde h$ to $\RR^d$ that is $(\alpha, K')$-H\"older on $\RR^d$ for some $K' > 0$ (which is independent of $\UC$ and $h$). For an even and smooth mollifier $M \,\colon \RR^d\rightarrow [0,\infty)$ supported on the unit ball $B_1$, we define $M_\sigma \coloneqq \sigma^{-d} M(\,\cdot\,/ \sigma)$, which is supported on the ball with radius $\sigma\in(0, 1]$, and set
\begin{equation*}
  h_\sigma\colon  \DC  \rightarrow \RR, \quad  u \mapsto \big(\tilde h \ast M_\sigma\big)(u) = \int \tilde h(u-z) M_\sigma(z)\,\dif z,
\end{equation*}
  where integration is over $\RR^d$ (i.e., effectively over the support of $M_\sigma$).
  The desired properties \eqref{eq:NiceProperties} now follow analogously to the proof of Lemma~8 of \cite{Manole21}. 
  For completeness, we include the arguments here.
  We first observe 
  \begin{equation*}
  	\tilde h(u)  = \tilde h(u_0) + \langle \nabla \tilde h(u_0), u-u_0\rangle + \tilde R_{u_0}(u),
    \quad\text{where}\quad |\tilde R_{u_0}(u)| \le \sqrt{d}K'\|u - u_0\|^\alpha,
  \end{equation*}
  for any $u,u_0\in\DC$, which can be derived analogously to \eqref{eq:HolderResidual}. 
  For the first bound in \eqref{eq:NiceProperties}, we note that
  $M$ is even, implying $\int z_i\,M_\sigma(z)\,\dif z = 0$ for all $1 \le i \le d$. By expanding $\tilde h(u - z)$ around $u\in\UC$ for $\|z\| \le \sigma$, it follows that
		\begin{align*}
			\left|h_\sigma(u) - h(u)\right|
      &= \left|\int \left(\tilde h(u-z) - \tilde h(u)\right) M_\sigma(z)\,\dif z\right|\\
			& \leq \int \left|\tilde R_u(u-z)\right| M_\sigma(z)\,\dif z \\
      & \leq \sqrt{d}K' \sigma^\alpha.
		\end{align*}		
    For the second inequality in \eqref{eq:NiceProperties}, we fix some $u_0\in\DC$ and observe for any $u \in \DC$ that
    \begin{align*}
      h_\sigma(u)
      &= \int \tilde{h}(u - z) M_\sigma(z)\,\dif z \\
      &= \int \big(\tilde{h}(u_0) + \langle \nabla\tilde h(u_0), u - z - u_0\rangle + \tilde R_{u_0}(u-z)\big)\, M_\sigma(z)\,\dif z \\
      &\eqqcolon A_1 + \langle A_2, u \rangle + A_3(u),
    \end{align*}
    where $A_1\in\RR$, $A_2\in\RR^d$, and $A_3(u) = \int \tilde R_{u_0}(z) M_\sigma(u-z)\,\dif z$ (after a change of variables).
    For $\beta\in\NN_0^d$ with $|\beta| = 2$, we evaluate $D^\beta h_\sigma$ at $u_0$. Exchanging differentiation and integration in the first inequality, and employing substitution in the final one, we observe
    \begin{align*}
      \big|D^\beta h_\sigma(u_0)\big|
      =
      \big|D^\beta A_3(u_0)\big|
      &\le
      \sigma^{-d-2} \int \big|\tilde R_{u_0}(z)\big| \left|D^\beta M\left(\frac{u_0 - z}{\sigma}\right)\right|\,\dif z \\
      &\le
      \sqrt{d}K'\sigma^{-d-2} \int \|u_0 - z\|^\alpha \left|D^\beta M\left(\frac{u_0 - z}{\sigma}\right)\right|\,\dif z  \\
      &\le
      \sqrt{d}K'\sigma^{\alpha-2} \int \|z\|^\alpha |D^\beta M(z)|\,\dif z \\
      &= K'' \sigma^{\alpha - 2}
    \end{align*}
    for some $0 < K'' < \infty$. Since this holds for any $u_0\in\DC$ with $K''$ independent of $u_0$ and $\sigma$, we conclude $\|D^\beta h_\sigma\|_{\infty, \DC} \le K'' \sigma^{\alpha-2}$. Analogous inequalities for $|\beta| < 2$ follow from the fact that $\DC$ is convex and bounded, so $D^\beta h_\sigma$ can be bounded in terms of the second derivatives of $h_\sigma$ and the diameter of $\DC$.
\end{proof}

\end{appendix}

\end{document}